\newtheoremstyle{lemma}{.5\baselineskip\@plus.2\baselineskip\@minus.2\baselineskip}{.5\baselineskip\@plus.2\baselineskip\@minus.2\baselineskip}
	{\itshape}
	{}
	{\bfseries}
	{.}
	{\newline}
	{\thmname{#1}\thmnumber{ #2}\thmnote{ (#3)}}	
\theoremstyle{lemma}
	\newtheorem{theorem}{Theorem}
	\newtheorem{lemma}[theorem]{Lemma}  
	\newtheorem{proposition}[theorem]{Proposition}
	\newtheorem{corollary}[theorem]{Corollary}
\newtheoremstyle{lemmanonumber}{.5\baselineskip\@plus.2\baselineskip\@minus.2\baselineskip}{.5\baselineskip\@plus.2\baselineskip\@minus.2\baselineskip}
	{\itshape}
	{}
	{\bfseries}
	{.}
	{\newline}
	{\thmname{#1}\thmnote{ (#3)}}
\theoremstyle{lemmanonumber}
	\newtheorem{proposition3}{Proposition 3}
	\newtheorem{proposition4}{Proposition 4}
	\newtheorem{proposition5}{Proposition 5}
\newtheoremstyle{definition}{.5\baselineskip\@plus.2\baselineskip\@minus.2\baselineskip}{.5\baselineskip\@plus.2\baselineskip\@minus.2\baselineskip}
	{}
	{}
	{\bfseries}
	{.}
	{\newline}
	{\thmname{#1}\thmnumber{ #2}\thmnote{ (#3)}}	
\theoremstyle{definition}
	\newtheorem{definition}[theorem]{Definition}
\newcommand{\N}{\ensuremath{\mathbb{N}}}
\newcommand{\R}{\ensuremath{\mathbb{R}}}
\newcommand{\sphere}{\ensuremath{\mathbb{S}}}
\DeclarePairedDelimiter\abs{\lvert}{\rvert}
\newcommand{\dd}{\ensuremath{\mathrm{d}}}
\DeclareMathOperator{\minRad}{minRad}
\DeclareMathOperator{\maxCurv}{maxCurv}
\DeclareMathOperator{\dcsd}{dcsd}
\DeclareMathOperator{\scsd}{scsd}
\DeclareMathOperator{\dcrit}{dcrit}
\DeclareMathOperator{\crit}{crit}
\renewcommand{\phi}{\varphi}
\renewcommand{\epsilon}{\varepsilon}
\begin{document}

\title{Discrete Thickness}
\author{\href{mailto:sebastian.scholtes@rwth-aachen.de}{Sebastian Scholtes}}
\address{Institut f. Mathematik, RWTH Aachen University, Templergraben 55, D-52062 Aachen, Germany}
\email{\href{mailto:sebastian.scholtes@rwth-aachen.de}{sebastian.scholtes@rwth-aachen.de}}
\urladdr{\href{http://www.instmath.rwth-aachen.de/~scholtes/home/}{http://www.instmath.rwth-aachen.de/~scholtes/home/}}
\date{\today}
\keywords{discrete energy, thickness, ropelength, $\Gamma$-convergence, geometric knot theory, ideal knot, Schur's Theorem} 
\subjclass[2010]{49J45; 57M25, 49Q10, 53A04} 
\begin{abstract}
	We investigate the relationship between a discrete version of thickness and its smooth counterpart.	These discrete energies are defined on equilateral polygons with $n$ vertices. 
	It will turn out that the smooth ropelength, which is the scale invariant quotient of length divided by thickness, is the $\Gamma$-limit of the discrete ropelength for $n\to\infty$, 
	regarding the topology induced by the Sobolev norm $||\cdot||_{W^{1,\infty}(\sphere_{1},\R^{d})}$. This result directly implies the convergence of almost minimizers of the discrete 
	energies in a fixed knot class to minimizers of the smooth energy. Moreover, we show that the unique absolute minimizer of inverse discrete thickness is the regular $n$-gon.
\end{abstract}
\maketitle

\section{Introduction }

In this article we are concerned with the relationship of a discrete version of the \emph{thickness} $\Delta$ of a curve $\gamma$, defined by
\begin{align*}
	\Delta[\gamma]\vcentcolon=\inf_{\substack{x,y,z\in \gamma(\sphere_{1})\\x\not=y\not=z\not=x}} r(x,y,z)
\end{align*}
on $\mathcal{C}$, the set of all curves $\gamma:\sphere_{1}\to \R^{d}$ that are parametrised by arc length, i.e., $\gamma\in C^{0,1}(\sphere_{1},\R^{d})=W^{1,\infty}(\sphere_{1},\R^{d})$
with $|\gamma'|=1$ a.e., and have length $\int_{\sphere_{1}}\abs{\gamma'}\,\dd t=1$. Here, $\sphere_{1}$ is the circle of length $1$ and $r(x,y,z)$ the radius of the unique circle that 
contains $x,y$ and $z$, which is set to infinity if the three points are collinear. This notion of thickness was introduced in \cite{Gonzalez1999a} and is equivalent to the Federer's reach,
see \cite{Federer1959a}. Geometrically, the thickness of a curve gives the radius of the largest uniform tubular neighbourhood
about the curve that does not intersect itself. The \emph{ropelength}, which is length divided by thickness, is scale invariant and a knot is called \emph{ideal} if it minimizes ropelength in 
a fixed knot class or, equivalently, minimizes this energy amongst all curves in this knot class with fixed length. These ideal knots are of great interest, not only to mathematicians but also to biologists, 
chemists, physicists, $\ldots$, since they exposit interesting physical features and resemble the time-averaged shapes of knotted DNA molecules in solution \cite{Stasiak1996a,Katritch1996a,Katritch1997a}, 
see \cite{1998a,Simon2002a} for an overview of physical knot theory with applications. The existence of ideal knots in every knot class was settled
in \cite{Cantarella2002a,Gonzalez2002b,Gonzalez2003a} and it was found that the unique absolute minimizer (in all knot classes) is the round circle.
Furthermore, this energy is self-repulsive, meaning that finite energy prevents the curve from having self intersections. By now it is well-known that thick curves, or in general manifolds of
positive reach, are of class $C^{1,1}$ and vice versa, see \cite{Federer1959a,Schuricht2003a,Lytchak2005a,Scholtes2013a}.
In \cite{Cantarella2002a} it was shown that ideal links must not be of class $C^{2}$ and computer experiments in \cite{Sullivan2002a} suggest that
$C^{1,1}$ regularity is optimal for knots, too. Still, there is a conjecture \cite[Conjecture 24]{Cantarella2002a} that ropelength minimizers are piecewise analytic. Further interesting properties of critical
points for the ropelength as well as the Euler-Lagrange equation were derived in \cite{Schuricht2003a,Schuricht2004a,Cantarella2011a}.\\

Another way to write the thickness of an arc length curve is
\begin{align}\label{formulathickness2}
	\Delta[\gamma]=\min\{\minRad(\gamma),2^{-1}\dcsd(\gamma)\},
\end{align}
which by \cite{Schuricht2003a}
holds for all arc length curves with positive thickness. The \emph{minimal radius of curvature} $\minRad(\gamma)$ of $\gamma$
is the inverse of the \emph{maximal curvature} $\maxCurv(\gamma)\vcentcolon=||\gamma''||_{L^{\infty}}$ and $\dcsd(\gamma)\vcentcolon=\min_{(x,y)\in \dcrit(\gamma)}\abs{y-x}$ 
is the \emph{doubly critical selfdistance}. The set of \emph{doubly critical points} $\dcrit(\gamma)$ of a $C^{1}$ curve $\gamma$ consists of all
pairs $(x,y)$ where $x=\gamma(t)$ and $y=\gamma(s)$ are distinct points on $\gamma$ so that $\langle \gamma'(t),\gamma(t)-\gamma(s) \rangle=\langle \gamma'(s),\gamma(t)-\gamma(s) \rangle=0$,
i.e., $s$ is critical for $u\mapsto\abs{\gamma(t)-\gamma(u)}^{2}$ and and $t$ for $v\mapsto\abs{\gamma(v)-\gamma(s)}^{2}$.\\

Appropriate versions of thickness for polygons derived from the representation in \eqref{formulathickness2} are already available. The curvature of a polygon, localized at a vertex $y$,
is defined by
\begin{align*}
	\kappa_{d}(x,y,z)\vcentcolon=\frac{2\tan(\frac{\phi}{2})}{\frac{\abs{x-y}+\abs{z-y}}{2}}\quad\text{and as an alternative}\quad
	\kappa_{d,2}(x,y,z)\vcentcolon=\frac{\phi}{\frac{\abs{x-y}+\abs{z-y}}{2}}
\end{align*}
where $x$ and $z$ are the vertices adjacent to $y$ and $\phi=\measuredangle(y-x,z-y)$ is the exterior angle at $y$, note $\kappa_{d,2}\leq \kappa_{d}$.
We then set $\minRad(p)\vcentcolon=\maxCurv(p)^{-1}\vcentcolon=\min_{i=1,\ldots,n}\kappa_{d}^{-1}(x_{i-1},x_{i},x_{i+1})$ if the polygon $p$ has the consecutive vertices $x_{i}$, 
$x_{0}\vcentcolon= x_{n}$, $x_{n+1}\vcentcolon=x_{1}$;
$\minRad_{2}$ and $\maxCurv_{2}$ are defined accordingly. The doubly critical self distance of a polygon $p$ is given as for a smooth curve if we define $\dcrit(p)$ to consist of 
pairs $(x,y)$ where $x=p(t)$ and $y=p(s)$ and  $s$ locally extremizes $u\mapsto\abs{p(t)-p(u)}^{2}$ and $t$ locally extremizes $v\mapsto\abs{p(v)-p(s)}^{2}$.
Now, the discrete thickness $\Delta_{n}$ defined on $\mathcal{P}_{n}$, the class of arc length parametrisations of equilateral polygons of length $1$ with $n$ segments is defined analogous to \eqref{formulathickness2} by
\begin{align*}
	\Delta_{n}[p]=\min\{\minRad(p),2^{-1}\dcsd(p)\}
\end{align*}
if all vertices are distinct and $\Delta_{n}[p]=0$ if two vertices of $p$ coincide.
This notion of thickness was introduced and investigated by Rawdon in \cite{Rawdon1997a,Rawdon1998a,Rawdon2000a,Rawdon2003a} and by Millett, Piatek and Rawdon in \cite{Millett2008a}.
In this series of works alternative representations of smooth and discrete thickness were established that were then used to show that not only does the value of the minimal discrete inverse thickness
converge to the minimal smooth inverse thickness in every tame knot class, but, additionally, a subsequence of the discrete equilateral minimizers, which are shown to exist in every tame knot class, 
converge to a smooth minimizer of the same knot type in the $C^{0}$ topology as the number of segments increases, at least if we require that all discrete minimizers are bounded in $L^{\infty}$.
Furthermore, it was shown that discrete thickness is continuous, for example on the space of simple equilateral polygons with fixed segment length. In \cite{Dai2000a,Rawdon2003a} similar questions for
more general energy functions were considered.\\

In the present work we continue this line of thought and investigate the way in which the discrete thickness approximates smooth thickness in more detail. It will turn out that the right framework
is given by $\Gamma$-convergence. This notion of convergence that was invented by DeGiorgi is devised in such a way, as to allow the convergence of minimizers and even almost minimizers.
For the convenience of the reader we summarise the relevant facts on $\Gamma$-convergence in Section \ref{sectiongammaconvergence}.

\begin{theorem}[Convergence of discrete inverse to smooth inverse thickness]\label{gammaconvergence}
	For every tame knot class $\mathcal{K}$ holds
	\begin{align*}
		\Delta_{n}^{-1}\xrightarrow{\Gamma}\Delta^{-1}\quad\text{on }(\mathcal{C}(\mathcal{K}),||\cdot||_{W^{1,\infty}(\sphere_{1},\R^{3})}).
	\end{align*}
\end{theorem}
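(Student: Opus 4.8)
The plan is to establish the two defining conditions of $\Gamma$-convergence on the metric space $(\mathcal{C}(\mathcal{K}),\norm{\cdot}_{W^{1,\infty}(\sphere_{1},\R^{3})})$, where $\Delta_{n}^{-1}$ is extended by $+\infty$ on $\mathcal{C}(\mathcal{K})\setminus\mathcal{P}_{n}$: the \emph{liminf inequality}, $\liminf_{n}\Delta_{n}^{-1}[p_{n}]\ge\Delta^{-1}[\gamma]$ whenever $p_{n}\to\gamma$ in $W^{1,\infty}$, and the existence of a \emph{recovery sequence} for every $\gamma\in\mathcal{C}(\mathcal{K})$. Throughout I use $\Delta^{-1}[\gamma]=\max\{\maxCurv(\gamma),2\dcsd(\gamma)^{-1}\}$ from \eqref{formulathickness2} and, for an equilateral $n$-gon $p$ with distinct vertices, $\Delta_{n}^{-1}[p]=\max\{\maxCurv(p),2\dcsd(p)^{-1}\}$. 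Since $\liminf_{n}\max\{a_{n},b_{n}\}\ge\max\{\liminf_{n}a_{n},\liminf_{n}b_{n}\}$ and $\limsup_{n}\max\{a_{n},b_{n}\}\le\max\{\limsup_{n}a_{n},\limsup_{n}b_{n}\}$, it suffices to control the maximal curvature and the doubly critical self-distance separately in each of the two conditions.

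For the liminf inequality, pass to a subsequence realising the inferior limit and assume it is finite (otherwise there is nothing to prove); then for $n$ large the $p_{n}$ are genuine equilateral $n$-gons in $\mathcal{K}$ with distinct vertices, $\maxCurv(p_{n})\le\Delta_{n}^{-1}[p_{n}]$ stays bounded, and $\dcsd(p_{n})\ge 2/\Delta_{n}^{-1}[p_{n}]$ stays bounded below by a positive constant. \textbf{Curvature.} The fields $p_{n}'$ of unit edge directions converge to $\gamma'$ in $L^{\infty}(\sphere_{1})$, so for any parameters $s<t$ at which the (necessarily regulated) limit $\gamma'$ is continuous, the first and last of the edges lying between $s$ and $t$ have directions tending to $\gamma'(s)$ and $\gamma'(t)$, and by the triangle inequality for the tangent indicatrix the exterior angles of the at most $n(t-s)+2$ intervening edges sum to at least $\measuredangle(\gamma'(s),\gamma'(t))-o(1)$. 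Using $2\tan(x/2)\ge x$ on $[0,\pi)$ and the common edge length $1/n$,
\begin{align*}
	\maxCurv(p_{n})&=2n\max_{i}\tan(\tfrac{\phi_{i}^{n}}{2})\ge n\max_{i}\phi_{i}^{n}\ge\frac{n\bigl(\measuredangle(\gamma'(s),\gamma'(t))-o(1)\bigr)}{n(t-s)+2}\\
	&\xrightarrow{\ n\to\infty\ }\frac{\measuredangle(\gamma'(s),\gamma'(t))}{t-s}\ge\frac{\abs{\gamma'(t)-\gamma'(s)}}{t-s}.
\end{align*}
Taking the supremum over such $s<t$ gives $\liminf_{n}\maxCurv(p_{n})\ge\maxCurv(\gamma)=\norm{\gamma''}_{L^{\infty}}$, since this supremum is the optimal Lipschitz constant of $\gamma'$; in particular $\gamma\in C^{1,1}$ with positive thickness. \textbf{Self-distance.} It remains to show $\limsup_{n}\dcsd(p_{n})\le\dcsd(\gamma)$: choosing a distance-minimising pair $(\gamma(t_{0}),\gamma(s_{0}))\in\dcrit(\gamma)$ and using the $C^{1}$-convergence $p_{n}\to\gamma$, one locates parameters $(\tau_{n},\sigma_{n})\to(t_{0},s_{0})$ with $(p_{n}(\tau_{n}),p_{n}(\sigma_{n}))\in\dcrit(p_{n})$; this stability statement --- whose proof must use the ``locally extremises'' formulation of $\dcrit(p_{n})$, because a polygon's squared distance function is only piecewise smooth at the vertices --- is of the kind carried out by Rawdon \cite{Rawdon1998a} and Millett--Piatek--Rawdon \cite{Millett2008a}. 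Combining the two parts yields $\liminf_{n}\Delta_{n}^{-1}[p_{n}]\ge\Delta^{-1}[\gamma]$.

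For the recovery sequence we may assume $\Delta^{-1}[\gamma]<\infty$, i.e.\ $\gamma\in C^{1,1}$ with positive thickness, since otherwise the constant sequence $p_{n}\equiv\gamma$ already satisfies $\limsup_{n}\Delta_{n}^{-1}[p_{n}]\le+\infty=\Delta^{-1}[\gamma]$. The plan is to first replace $\gamma$ by its convolution smoothing $\gamma_{\varepsilon}$, reparametrised by arc length and rescaled to length $1$, so that $\gamma_{\varepsilon}\to\gamma$ in $W^{1,\infty}$ as $\varepsilon\to0$, $\gamma_{\varepsilon}\in\mathcal{K}$ for $\varepsilon$ small, and $\Delta^{-1}[\gamma_{\varepsilon}]\to\Delta^{-1}[\gamma]$ by continuity of smooth thickness on curves of positive thickness \cite{Schuricht2003a}. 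For fixed small $\varepsilon$, inscribe in $\gamma_{\varepsilon}$ an equilateral $n$-gon with vertices $\gamma_{\varepsilon}(u_{j}^{n})$ and all chords of a common length, and rescale by $\lambda_{n}\to1$ to obtain $p_{n}^{\varepsilon}\in\mathcal{P}_{n}$ of length $1$; because $\gamma_{\varepsilon}$ is smooth, a Taylor expansion about the arc midpoints shows that the \emph{first-order} chord--tangent error vanishes, so each edge direction agrees with the midpoint tangent up to $O(n^{-2})$ and hence $\phi_{i}^{n}\le\maxCurv(\gamma_{\varepsilon})\cdot\tfrac{1}{2}(u_{i+1}^{n}-u_{i-1}^{n})+o(n^{-1})=\maxCurv(\gamma_{\varepsilon})n^{-1}+o(n^{-1})$, giving $\maxCurv(p_{n}^{\varepsilon})=2n\max_{i}\tan(\phi_{i}^{n}/2)\to\maxCurv(\gamma_{\varepsilon})$; inscription estimates for smooth curves in the spirit of \cite{Rawdon1998a,Millett2008a} give $\dcsd(p_{n}^{\varepsilon})\to\dcsd(\gamma_{\varepsilon})$, so $\Delta_{n}^{-1}[p_{n}^{\varepsilon}]\to\Delta^{-1}[\gamma_{\varepsilon}]$, and $p_{n}^{\varepsilon}\in\mathcal{P}_{n}(\mathcal{K})$ for $n$ large. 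Since $\Delta^{-1}[\gamma_{\varepsilon}]\to\Delta^{-1}[\gamma]$ and $p_{n}^{\varepsilon}\to\gamma_{\varepsilon}\to\gamma$, a standard diagonalisation over $\varepsilon=\varepsilon_{n}\downarrow0$ produces $p_{n}:=p_{n}^{\varepsilon_{n}}\to\gamma$ in $W^{1,\infty}$ with $\Delta_{n}^{-1}[p_{n}]\to\Delta^{-1}[\gamma]$, which is in particular a recovery sequence.

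I expect the curvature estimate for the recovery sequence to be the main obstacle. For a merely $C^{1,1}$ curve the first-order chord--tangent error is of the same order $n^{-1}$ as the turning angle --- the denominator of $\kappa_{d}$ is an average of two edge lengths, $\approx n^{-1}$, whereas the exterior angle at a vertex accumulates the turning over an arc of length $\approx 2n^{-1}$ --- so the naively inscribed equilateral $n$-gon may carry discrete curvature up to roughly $2\maxCurv(\gamma)$, and it is exactly to make that first-order error vanish that the smoothing step is introduced; reconciling the smoothing with the knot class and with the smooth thickness (hence the need for its continuity) also requires care. A second delicate point is the stability of doubly critical pairs under $C^{1}$-perturbation, where the non-smoothness of a polygon's squared distance function at its vertices forces one to argue with local extremisers rather than with gradients; here one relies on the analysis of Rawdon \cite{Rawdon1998a} and Millett--Piatek--Rawdon \cite{Millett2008a}.
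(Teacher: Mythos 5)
Your overall architecture --- splitting the $\Gamma$-convergence into the liminf inequality and a recovery sequence, and splitting each of these into a curvature part and a $\dcsd$ part via $\Delta^{-1}=\max\{\maxCurv,2\dcsd^{-1}\}$ --- is exactly the paper's. The liminf half is essentially the paper's argument in sketch form: your total-curvature estimate for $\liminf_{n}\maxCurv(p_{n})\geq\maxCurv(\gamma)$ is a workable substitute for the Schur-type compactness of Proposition \ref{compactness}, and the $\dcsd$ stability is deferred to Rawdon in both treatments. One caveat there: the stability of doubly critical pairs under $C^{1}$-perturbation (Rawdon's Lemmas 2.9.7--2.9.8, packaged in the paper as Lemma \ref{approximationcurveswithsmalldcsc}) needs the gap $\minRad(\gamma)-\tfrac{1}{2}\dcsd(\gamma)>0$, so your unconditional claim ``$\limsup_{n}\dcsd(p_{n})\leq\dcsd(\gamma)$'' is not available in general. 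This is harmless only because of a case distinction you do not make: if the gap fails then $\Delta^{-1}[\gamma]=\maxCurv(\gamma)$ and the curvature part alone already gives the liminf inequality, which is precisely how Proposition \ref{liminfinequality} is organised.

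The genuine gap is in the recovery sequence, and it stems from a misdiagnosis. You insert a mollification $\gamma_{\varepsilon}$ because you fear that direct inscription into a $C^{1,1}$ curve loses a factor of $2$ in the discrete curvature. But the limsup inequality does not require $\maxCurv(p_{n})\to\maxCurv(\gamma)$; it only requires $\limsup_{n}\maxCurv(p_{n})\leq\Delta^{-1}[\gamma]$, and this comes for free from the definition of thickness: three consecutive vertices $x,y,z$ of an inscribed polygon are points of $\gamma(\sphere_{1})$, so their circumradius satisfies $r(x,y,z)\geq\Delta[\gamma]$, and the identity
\begin{align*}
	\kappa_{d}(x,y,z)=\frac{2\tan(\frac{\alpha+\beta}{2})}{\sin(\alpha)+\sin(\beta)}\,\frac{1}{r(x,y,z)},\qquad \alpha,\beta=O(n^{-1}),
\end{align*}
gives $\kappa_{d}\leq(1+O(n^{-2}))\,\Delta[\gamma]^{-1}$ with no smoothness beyond $C^{1,1}$ and no factor of $2$. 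That is Step 1 of Proposition \ref{limsupinequality}, and it is the idea your proposal is missing. Your detour, besides being unnecessary, leaves several nontrivial claims unproved: (i) $\Delta^{-1}[\gamma_{\varepsilon}]\to\Delta^{-1}[\gamma]$ requires the direction $\liminf_{\varepsilon}\Delta[\gamma_{\varepsilon}]\geq\Delta[\gamma]$, which is not the easy upper-semicontinuity direction and cannot be dismissed as ``continuity of smooth thickness''; (ii) the existence of a closed inscribed polygon with $n$ equal chord lengths, in the same knot class, is itself a result that must be supplied (the paper uses \cite[Proposition 10]{Scholtes2013d}); (iii) $\liminf_{n}\dcsd(p_{n}^{\varepsilon})\geq\dcsd(\gamma_{\varepsilon})$ requires excluding degenerate limits of doubly critical pairs, which the paper obtains from the fact that the total curvature of a polygon between two doubly critical points is at least $\pi$, combined with the curvature bound of Step 1 --- you do not address this at all. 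With the circumradius observation the whole mollification layer, and with it item (i), disappears, while (ii) and (iii) are exactly what the paper's Steps 1--3 and Lemma \ref{limitsofdoublycriticalpoints} supply.
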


Here, the addition of a knot class $\mathcal{K}$ means that only knots of this particular knot class are considered. The functionals are extended by infinity outside their natural domain.
By the properties presented in Section \ref{sectiongammaconvergence} together with Proposition \ref{compactnessdcsd}, we obtain the following convergence result of 
polygonal ideal knots to smooth ideal knots improving the convergence in \cite[Theorem 8.5]{Rawdon2003a} from $C^{0}$ to $W^{1,\infty}=C^{0,1}$.

\begin{corollary}[Ideal polygonal knots converge to smooth ideal knots]\label{theoremconvergenceofminimizers}
	Let $\mathcal{K}$ be a tame knot class, $p_{n}\in\mathcal{P}_{n}(\mathcal{K})$ bounded in $L^{\infty}$ with $\abs{\inf_{\mathcal{P}_{n}(\mathcal{K})}\Delta_{n}^{-1}-\Delta_{n}^{-1}(p_{n})}\to 0$. 
	Then there is a subsequence
	\begin{align*}
		p_{n_{k}}\xrightarrow[k\to\infty]{W^{1,\infty}(\sphere_{1},\R^{3})}\gamma\in \mathcal{C}(\mathcal{K})\quad\text{with}\quad
		\Delta^{-1}[\gamma]=\inf_{\mathcal{C}(\mathcal{K})}\Delta^{-1}=\lim_{k\to\infty}\Delta_{n_{k}}[p_{n_{k}}].
	\end{align*}
\end{corollary}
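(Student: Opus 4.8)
The plan is to derive Corollary \ref{theoremconvergenceofminimizers} as a formal consequence of the $\Gamma$-convergence in Theorem \ref{gammaconvergence} together with an equi-coercivity statement, following the standard ``fundamental theorem of $\Gamma$-convergence'' (which, as promised in the text, is recalled in Section \ref{sectiongammaconvergence}). Concretely, I would proceed as follows.

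\emph{Step 1: Compactness / equi-coercivity.} First I would invoke Proposition \ref{compactnessdcsd} (cited in the paragraph between the theorem and the corollary) to see that a sequence $p_{n}\in\mathcal{P}_{n}(\mathcal{K})$ which is bounded in $L^{\infty}$ and along which $\Delta_{n}^{-1}(p_{n})$ stays bounded is precompact in the $W^{1,\infty}(\sphere_{1},\R^{3})$ topology. The boundedness of $\Delta_{n}^{-1}(p_{n})$ follows from the hypothesis $\abs{\inf_{\mathcal{P}_{n}(\mathcal{K})}\Delta_{n}^{-1}-\Delta_{n}^{-1}(p_{n})}\to 0$ once we know $\inf_{\mathcal{P}_{n}(\mathcal{K})}\Delta_{n}^{-1}$ is bounded uniformly in $n$; the latter is obtained by fixing one smooth representative $\gamma_{0}\in\mathcal{C}(\mathcal{K})$ of finite thickness, using the $\limsup$ (recovery-sequence) part of Theorem \ref{gammaconvergence} to produce $q_{n}\in\mathcal{P}_{n}(\mathcal{K})$ with $\Delta_{n}^{-1}(q_{n})\to\Delta^{-1}[\gamma_{0}]<\infty$, and noting $\inf_{\mathcal{P}_{n}(\mathcal{K})}\Delta_{n}^{-1}\le\Delta_{n}^{-1}(q_{n})$. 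Hence there is a subsequence $p_{n_{k}}\xrightarrow{W^{1,\infty}}\gamma$ for some $\gamma\in W^{1,\infty}(\sphere_{1},\R^{3})$; since $W^{1,\infty}$-convergence implies $C^{0}$-convergence and the knot class is a $C^{0}$-open condition on embeddings (plus arc-length parametrisation passes to the limit under $W^{1,\infty}$-convergence), we get $\gamma\in\mathcal{C}(\mathcal{K})$.

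\emph{Step 2: Identify the limit as a minimizer.} Let $m_{n}\vcentcolon=\inf_{\mathcal{P}_{n}(\mathcal{K})}\Delta_{n}^{-1}$ and $m\vcentcolon=\inf_{\mathcal{C}(\mathcal{K})}\Delta^{-1}$. The $\liminf$ inequality of $\Gamma$-convergence applied to $p_{n_{k}}\to\gamma$ gives
\begin{align*}
	\Delta^{-1}[\gamma]\le\liminf_{k\to\infty}\Delta_{n_{k}}^{-1}(p_{n_{k}})=\liminf_{k\to\infty}m_{n_{k}},
\end{align*}
where the last equality uses $\Delta_{n_{k}}^{-1}(p_{n_{k}})-m_{n_{k}}\to 0$. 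For the reverse, take any $\eta\in\mathcal{C}(\mathcal{K})$ and a recovery sequence $\eta_{n}\in\mathcal{P}_{n}(\mathcal{K})$ with $\Delta_{n}^{-1}(\eta_{n})\to\Delta^{-1}[\eta]$; then $m_{n}\le\Delta_{n}^{-1}(\eta_{n})$, so $\limsup_{n}m_{n}\le\Delta^{-1}[\eta]$, and taking the infimum over $\eta$ yields $\limsup_{n}m_{n}\le m$. Combining, $\Delta^{-1}[\gamma]\le\liminf_{k}m_{n_{k}}\le\limsup_{k}m_{n_{k}}\le m$, and since $\gamma\in\mathcal{C}(\mathcal{K})$ forces $\Delta^{-1}[\gamma]\ge m$, all these quantities coincide: $\Delta^{-1}[\gamma]=m=\lim_{k\to\infty}m_{n_{k}}=\lim_{k\to\infty}\Delta_{n_{k}}^{-1}(p_{n_{k}})$. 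Finally $\Delta_{n_{k}}[p_{n_{k}}]\to\Delta[\gamma]=m^{-1}$ by continuity of $t\mapsto t^{-1}$ away from $0$ (note $m<\infty$ gives $\Delta[\gamma]>0$), which is the stated conclusion; if the intended reading is $\lim_k\Delta_{n_k}^{-1}(p_{n_k})$, the identity is already in hand.

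\emph{Main obstacle.} The genuinely substantive input is the existence of a recovery sequence inside the \emph{correct knot class} with the \emph{correct combinatorial type} ($n$ equilateral segments), i.e. the $\limsup$ half of Theorem \ref{gammaconvergence}; given the theorem, this is available, so within the scope of this corollary the only real work is Step 1 — verifying that the $L^{\infty}$-bound plus the energy bound really do feed into Proposition \ref{compactnessdcsd} to yield $W^{1,\infty}$-precompactness, and checking that the knot type is preserved in the limit. Everything else is the routine $\Gamma$-convergence bookkeeping of Step 2. I would therefore keep the write-up short, citing Section \ref{sectiongammaconvergence} and Proposition \ref{compactnessdcsd} for the two non-trivial ingredients and spelling out only the $\liminf$/$\limsup$ chain above.
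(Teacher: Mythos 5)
Your proposal is correct and follows the same route as the paper: the paper obtains the corollary directly from Theorem \ref{gammaconvergence} together with Proposition \ref{compactnessdcsd} and the standard convergence-of-almost-minimizers result (Theorem \ref{dalmasoproposition}), whose proof is exactly your Step 2 bookkeeping. Your Step 1 observation that the uniform bound on $\inf_{\mathcal{P}_{n}(\mathcal{K})}\Delta_{n}^{-1}$ (needed to feed the hypothesis $\liminf_{n}\Delta_{n}[p_{n}]^{-1}<\infty$ into Proposition \ref{compactnessdcsd}) comes from a recovery sequence for a fixed $C^{1,1}$ representative is a detail the paper leaves implicit, but it is consistent with the intended argument.
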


The subsequent compactness result is proven via a version of Schur's Comparison Theorem (see Proposition \ref{schurtheorem}) that allows to compare polygons with circles.

\begin{proposition}[Compactness]\label{compactness}
	Let $p_{n}\in \mathcal{P}_{n}(\mathcal{K})$ bounded in $L^{\infty}$ with $\liminf_{n\to\infty}\maxCurv(p_{n})<\infty$. Then there is $\gamma\in C^{1,1}(\sphere_{1},\R^{d})$ and a subsequence
	\begin{align*}
		p_{n_{k}}\xrightarrow[k\to\infty]{W^{1,\infty}(\sphere_{1},\R^{d})} \gamma\in\mathcal{C} \quad\text{with}\quad \maxCurv(\gamma)\leq \liminf_{n\to\infty}\maxCurv(p_{n}).
	\end{align*}
\end{proposition}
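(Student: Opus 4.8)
The plan is to extract the limit curve from a compactness argument in $W^{1,\infty}$ and then verify the curvature bound via Schur's comparison theorem. First I would pass to a subsequence along which $\maxCurv(p_{n_k})\to M\vcentcolon=\liminf_{n\to\infty}\maxCurv(p_n)<\infty$ and along which the $L^\infty$-bound holds uniformly. Since each $p_n$ is parametrised by arc length on $\sphere_1$, we have $|p_n'|=1$ a.e., so the $p_n$ are uniformly Lipschitz with constant $1$; together with the uniform $L^\infty$-bound, the Arzel\`a--Ascoli theorem gives a subsequence $p_{n_k}\to\gamma$ in $C^0(\sphere_1,\R^d)$. One checks $\gamma$ is $1$-Lipschitz. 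To upgrade this to $W^{1,\infty}$-convergence I would use that the tangent directions $p_{n_k}'$ (which are piecewise constant) have bounded variation controlled by the exterior angles, and the curvature bound $\maxCurv(p_{n_k})\le M$ forces the exterior angle at each vertex to be $O(1/n_k)$; hence the $p_{n_k}'$ are equi-(almost-)continuous and converge uniformly (after identifying a subsequence) to $\gamma'$, giving $p_{n_k}\to\gamma$ in $W^{1,\infty}$. In particular $|\gamma'|=1$ a.e. and $\gamma\in\mathcal{C}$.

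Next I would establish that $\gamma\in C^{1,1}$ with $\maxCurv(\gamma)\le M$. The cleanest route is to compare arc length and chord length: for a $C^{1,1}$ arc length curve, a bound $\maxCurv\le M$ is equivalent to a two-point estimate of the form $|\gamma'(t)-\gamma'(s)|\le M|t-s|$, or to the statement that any sub-arc of length $\ell\le \pi/M$ subtends its chord at a controlled rate. The idea is to show the discrete analogue holds uniformly: if $p_n$ has $\maxCurv(p_n)\le M$, then for vertices $x_i,\dots,x_j$ the turning of the tangent from segment $i$ to segment $j$ is at most $M$ times the arc length between them (this is just summing the exterior angle bounds $\phi_k \le$ const$\cdot M/n$, using $2\tan(\phi/2)\ge\phi$ so that $\kappa_d$-control gives angle control). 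This uniform discrete curvature bound passes to the $W^{1,\infty}$-limit: the modulus of continuity of $p_{n_k}'$ is dominated by $t\mapsto Mt$ near $0$, and this is preserved under uniform convergence of the derivatives, yielding $|\gamma'(t)-\gamma'(s)|\le M|t-s|$ and hence $\gamma\in C^{1,1}(\sphere_1,\R^d)$ with $\maxCurv(\gamma)=\|\gamma''\|_{L^\infty}\le M$.

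Here is where Schur's Comparison Theorem (Proposition \ref{schurtheorem}), in the polygon-versus-circle form alluded to in the excerpt, does the real work: it says precisely that a polygon whose total turning over an arc is no more than that of a circular arc of the same length stays at least as far from being closed up, i.e. its chords are at least as long as the corresponding circular chords. Applying this with the circle of radius $1/M$ converts the discrete curvature bound into the uniform chord-length lower bound $|p_n(t)-p_n(s)|\ge \tfrac{2}{M}\sin\!\big(\tfrac{M|t-s|}{2}\big)$ for $|t-s|\le \pi/M$; letting $k\to\infty$ gives the same inequality for $\gamma$, which is exactly the characterisation of $\maxCurv(\gamma)\le M$ for arc length $C^{1,1}$ curves. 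Either this Schur route or the direct modulus-of-continuity route can be used; I would present the Schur version since the excerpt flags it as the intended tool and it is the more robust statement.

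The main obstacle is the upgrade from $C^0$ to $W^{1,\infty}$ convergence together with the preservation of the curvature bound in the limit. The subtlety is that $p_{n_k}'$ is only piecewise constant with jumps at vertices, so "$p_{n_k}'\to\gamma'$ uniformly" needs care: one must show the jump locations and sizes become negligible, which is exactly where $\maxCurv(p_{n_k})\le M$ is used (it controls each exterior angle by $O(1/n_k)$, hence the total oscillation of $p_{n_k}'$ across any short arc). A clean way to package this is to reparametrise or to work with the arc-length-indexed tangent indicatrix as a curve of bounded variation with uniformly bounded variation and a uniform modulus, then invoke Helly/Arzel\`a--Ascoli for the indicatrices; continuity of the limit indicatrix (which is $\gamma'$) follows from the uniform $O(1/n_k)$ jump bound. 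Once $W^{1,\infty}$-convergence is in hand, lower semicontinuity of $\maxCurv$ — equivalently, stability of the chord inequality above — is essentially immediate, so the bulk of the care is front-loaded into this compactness step.
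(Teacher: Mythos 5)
Your argument is correct in outline but follows a genuinely different route from the paper's for the central step (upgrading $C^{0}$ to $W^{1,\infty}$ convergence and obtaining $\gamma\in C^{1,1}$ with the curvature bound). You work directly with the tangent indicatrix: for an equilateral $n$-gon of length $1$ the bound $\maxCurv(p_{n})\leq M$ reads $2n\tan(\phi_{i}/2)\leq M$, so each exterior angle is at most $M/n$ and the total turning over any parameter interval $[s,t]$ is at most $M\abs{t-s}+M/n$; a Helly/Arzel\`a--Ascoli selection for these asymptotically equicontinuous $\sphere^{d-1}$-valued maps yields uniform convergence of $p_{n_{k}}'$ to a limit with Lipschitz modulus $M$, which is identified with $\gamma'$ by integrating. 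This buys three things at once: $\abs{\gamma'}=1$ comes for free from $\abs{p_{n_{k}}'}=1$ and uniform convergence, $\gamma\in C^{1,1}$ with $\maxCurv(\gamma)\leq M$ follows from the inherited modulus, and no external regularity result is needed. The paper instead (i) extracts a weak $W^{1,2}$ limit and proves $\abs{\gamma'}=1$ by a test-function argument combined with Schur's comparison (Proposition \ref{schurtheorem}) for the lower bound, (ii) uses Corollary \ref{polygonstaysoutsideofsphere} to place horn tori tangentially along the limit curve and invokes a cited result of Gerlach to conclude $C^{1,1}$ regularity together with the curvature bound, and (iii) obtains $W^{1,\infty}$ convergence by a contradiction argument at vertex parameters. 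Your route is the more elementary and self-contained of the two; the paper's reuses the machinery of Section \ref{sectionSchur} that it develops anyway. One caveat on your alternative Schur-based variant: the chord inequality $\abs{\gamma(t)-\gamma(s)}\geq \tfrac{2}{M}\sin(\tfrac{M}{2}\abs{t-s})$ characterises $\maxCurv(\gamma)\leq M$ only once $\gamma$ is already known to be $C^{1,1}$, and restricting a polygon to a subinterval shortens its end edges, which can increase $\kappa_{d,2}$ at the adjacent vertices, so the hypotheses of Proposition \ref{schurtheorem} would need to be rechecked on sub-polygons; your primary modulus-of-continuity route avoids both issues and is the one you should present.
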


This result is then used to show another compactness result that additionally guarantees that the limit curve belongs to the same knot class, if one assures that the doubly critical self distance
is bounded, too.

\begin{proposition}[Compactness II]\label{compactnessdcsd}
	Let  and $p_{n}\in \mathcal{P}_{n}(\mathcal{K})$ bounded in $L^{\infty}$ with $\liminf_{n\to\infty}\Delta_{n}[p_{n}]^{-1}<\infty$. 
	Then there is
	\begin{align*}
		\gamma\in \mathcal{C}(\mathcal{K})\cap C^{1,1}(\sphere_{1},\R^{d})\quad\text{with}\quad p_{n_{k}}\to \gamma\text{ in }W^{1,\infty}(\sphere_{1},\R^{d}).
	\end{align*}
\end{proposition}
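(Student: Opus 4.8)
\emph{Strategy.} The hypothesis $\liminf_n\Delta_n[p_n]^{-1}<\infty$ secretly contains two bounds: a uniform curvature bound along a subsequence, and a uniform lower bound for the doubly critical selfdistance along that subsequence. The first feeds Proposition \ref{compactness} and produces the $W^{1,\infty}$-limit $\gamma$ together with $\gamma\in C^{1,1}$; the second is what promotes Proposition \ref{compactness} to the present statement, by forcing $\gamma$ to be thick, hence embedded and of knot type $\mathcal{K}$.

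\emph{Step 1: extracting the limit.} As long as the vertices of $p$ are distinct one has $\Delta_n[p]^{-1}=\max\{\maxCurv(p),2\,\dcsd(p)^{-1}\}$, so the hypothesis lets us pass to a subsequence (not relabelled) with $\Delta_n[p_n]^{-1}\to\ell<\infty$; eventually the vertices are distinct, $\maxCurv(p_n)\le\Delta_n[p_n]^{-1}$ is bounded, and $\dcsd(p_n)\ge 2\Delta_n[p_n]\to 2\theta$ with $\theta\vcentcolon=\ell^{-1}>0$. Since in addition $(p_n)$ is bounded in $L^\infty$, Proposition \ref{compactness} yields a further subsequence $p_{n_k}\to\gamma$ in $W^{1,\infty}(\sphere_1,\R^d)$ with $\gamma\in\mathcal{C}\cap C^{1,1}(\sphere_1,\R^d)$ and $\maxCurv(\gamma)\le\liminf_k\maxCurv(p_{n_k})\le\ell$, i.e.\ $\minRad(\gamma)\ge\theta$.

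\emph{Step 2: the limit is thick.} I would show $\Delta[\gamma]\ge\theta>0$. The quick way is to invoke the lower semicontinuity estimate $\Delta^{-1}[\gamma]\le\liminf_k\Delta_{n_k}^{-1}[p_{n_k}]=\ell$, valid whenever $p_{n_k}\to\gamma$ in $W^{1,\infty}$ — this is precisely the ``$\liminf$'' half of the $\Gamma$-convergence in Theorem \ref{gammaconvergence}, and its proof, being a local comparison of the discrete thickness of $p_{n_k}$ with the smooth thickness of its limit, does not use that the ambient dimension is three. Alternatively one argues directly from \eqref{formulathickness2}: with $\minRad(\gamma)\ge\theta$ already in hand it remains to rule out doubly critical pairs of $\gamma$ at distance below $2\theta$, i.e.\ to establish $\dcsd(\gamma)\ge\liminf_k\dcsd(p_{n_k})\ge 2\theta$, a lower semicontinuity statement for the doubly critical selfdistance along the $C^1$-convergent sequence $p_{n_k}\to\gamma$. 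Either way $\gamma$ is $C^{1,1}$ with $\Delta[\gamma]\ge\theta>0$, in particular embedded.

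\emph{Step 3: the knot class, and the main obstacle.} From $p_{n_k}\to\gamma$ in $W^{1,\infty}$ we get $\norm{p_{n_k}-\gamma}_{C^1}\to 0$, so for $k$ large $p_{n_k}$ is $C^1$-close to $\gamma$ relative to $\theta$; by the standard fact that such a curve is ambient isotopic to a curve of thickness $\ge\theta$ — the straight-line homotopy $s\mapsto\gamma+s(p_{n_k}-\gamma)$ stays inside the embedded normal tube of $\gamma$ and remains regular — $\gamma$ and $p_{n_k}$ have the same knot type, whence $\gamma\in\mathcal{C}(\mathcal{K})$. I expect Step 2 to be the real obstacle: Proposition \ref{compactness} by itself does not exclude the limit self-touching or jumping knot class, and the work lies in carrying the doubly-critical-selfdistance bound across the limit — transparent if one may quote the $\Gamma$-$\liminf$ inequality, but by hand it requires showing that the (approximate) doubly critical pairs of the $p_{n_k}$ do not vanish in the $C^1$-limit and, conversely, that every doubly critical pair of $\gamma$ is already detected at the right distance by the $p_{n_k}$ — delicate, since critical points of the selfdistance are not individually stable under perturbation.
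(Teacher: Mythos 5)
Your Steps 1 and 3 follow the paper's skeleton: Proposition \ref{compactness} produces the limit $\gamma\in\mathcal{C}\cap C^{1,1}$, and the knot class is then preserved by an ambient isotopy inside the normal tube of $\gamma$ (this is the paper's Proposition \ref{polygonalconvergencedoesnotchangeknotclass}; note that its real content is showing that the nearest-point projection $\xi_{\gamma}\circ p_{n_{k}}$ is eventually injective, which your straight-line homotopy silently needs). The genuine gap is Step 2, and you correctly sense where it is without closing it. Your ``quick way'' is circular in the paper's architecture: the proof of the $\liminf$ inequality (Proposition \ref{liminfinequality}) begins by invoking Proposition \ref{compactnessdcsd} to reduce to $\gamma\in C^{1,1}$, so the $\Gamma$-$\liminf$ half of Theorem \ref{gammaconvergence} is not available here. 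Your ``by hand'' alternative --- lower semicontinuity of $\dcsd$ along the $C^{1}$-convergent sequence --- is exactly the delicate claim you flag and do not prove; worse, the representation \eqref{formulathickness2} into which you want to feed it is only stated for curves already known to have positive thickness, and a non-injective limit (the very case to be excluded) can have $\minRad(\gamma)>0$ and $\dcsd(\gamma)>0$ while $\Delta[\gamma]=0$, because $\dcsd$ only registers pairs of \emph{distinct} points. So the route via $\Delta[\gamma]\geq\theta$ does not get off the ground as written.

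The paper sidesteps all of this by proving only what is needed, namely injectivity of $\gamma$, and by switching from the doubly to the \emph{singly} critical self distance: by \cite[Theorem 3.6]{Millett2008a} one has $\Delta_{n}[p]=\min\{\minRad(p),\scsd(p)\}$. If $\gamma(s)=\gamma(t)=x$ with $s\neq t$, set $r_{n}\vcentcolon=||\gamma-p_{n}||_{L^{\infty}}+\tfrac1n$; then $p_{n}(s),p_{n}(t)\in B_{r_{n}}(x)$, while for large $n$ there are parameters $u,v$ separating $s$ from $t$ with $p_{n}(u),p_{n}(v)\notin B_{4r_{n}}(x)$. The continuous function $w\mapsto\abs{p_{n}(t)-p_{n}(w)}$ therefore attains an interior minimum $\leq 2r_{n}$ on the arc from $u$ to $v$ containing $s$; an interior minimum is singly critical, so $\scsd(p_{n})\leq 2r_{n}\to 0$, contradicting $\Delta_{n}[p_{n}]^{-1}\leq K$. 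Interior minima of a continuous function are stable under perturbation in exactly the way arbitrary critical points are not, which is why this argument succeeds where the $\dcsd$ route stalls. You would need to supply this (or an equivalent) injectivity argument to complete Step 2.
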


If the knot class is not fixed the unique absolute minimizers of $\Delta_{n}^{-1}$ is the regular $n$-gon.

\begin{proposition}[Regular $n$-gon is unique minimizer of $\Delta_{n}^{-1}$]
	Let $p\in \mathcal{P}_{n}$ and $g_{n}$ the regular $n$-gon. Then
	\begin{align*}
		\Delta_{n}[g_{n}]^{-1}\leq \Delta_{n}[p]^{-1},
	\end{align*}
	with equality if and only if $p$ is a regular $n$-gon.
\end{proposition}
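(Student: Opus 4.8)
The plan is to show two things: (1) the regular $n$-gon $g_n$ has $\Delta_n[g_n]^{-1} = \maxCurv(g_n)$, i.e.\ the minimal-radius-of-curvature term is the binding constraint for the regular polygon; and (2) for an arbitrary $p\in\mathcal P_n$ one has $\maxCurv(g_n)\le\maxCurv(p)\le\Delta_n[p]^{-1}$, with equality in the first inequality only for a regular $n$-gon. Combining these gives $\Delta_n[g_n]^{-1}=\maxCurv(g_n)\le\maxCurv(p)\le\Delta_n[p]^{-1}$, which is the claim, and the equality analysis pins down the minimizer.

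First I would compute everything explicitly for $g_n$. Each segment has length $1/n$, every exterior angle equals $2\pi/n$, so $\kappa_d(x_{i-1},x_i,x_{i+1})=2\tan(\pi/n)/(1/n)=2n\tan(\pi/n)$ for every $i$; hence $\minRad(g_n)=\bigl(2n\tan(\pi/n)\bigr)^{-1}$. Next I would check that $\tfrac12\dcsd(g_n)\ge\minRad(g_n)$, so that $\Delta_n[g_n]=\minRad(g_n)$. The doubly critical self-distance of the regular $n$-gon is realized by a pair of (nearly) antipodal vertices/edge-midpoints, and its value is of order $1/\pi$ (the diameter of the circumscribed circle is $1/(n\sin(\pi/n))\to 1/\pi$), which for $n\ge 3$ comfortably exceeds $2\minRad(g_n)=\bigl(n\tan(\pi/n)\bigr)^{-1}\to 1/\pi$ from below — this needs a short monotonicity check but is elementary. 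So $\Delta_n[g_n]^{-1}=2n\tan(\pi/n)=\maxCurv(g_n)$.

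Next, for arbitrary $p\in\mathcal P_n$: if two vertices coincide then $\Delta_n[p]=0$ and the inequality is trivial, so assume the vertices are distinct. Since $p$ is equilateral with total length $1$, each of the $n$ segments has length $1/n$, so at each vertex $x_i$ the local curvature is $\kappa_d(x_{i-1},x_i,x_{i+1})=2\tan(\phi_i/2)/(1/n)=2n\tan(\phi_i/2)$, where $\phi_i=\measuredangle(x_i-x_{i-1},x_{i+1}-x_i)\in[0,\pi)$ is the exterior angle. Thus $\maxCurv(p)=2n\max_i\tan(\phi_i/2)$. The key elementary fact is that the exterior angles of a \emph{closed} polygon cannot all be small: $\sum_{i=1}^n\phi_i\ge 2\pi$ (the total turning of a closed polygon is at least $2\pi$; this follows, e.g., from projecting onto a generic plane and using that a closed planar polygon turns by a nonzero multiple of $2\pi$, or directly from the fact that the tangent indicatrix of a closed curve must "wrap around"). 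Since $\tan(\cdot/2)$ is convex and increasing on $[0,\pi)$, Jensen gives $\max_i\tan(\phi_i/2)\ge\frac1n\sum_i\tan(\phi_i/2)\ge\tan\!\bigl(\frac1{2n}\sum_i\phi_i\bigr)\ge\tan(\pi/n)$, hence $\maxCurv(p)\ge 2n\tan(\pi/n)=\maxCurv(g_n)$. Together with $\maxCurv(p)\le\Delta_n[p]^{-1}$ (immediate from the definition $\Delta_n[p]=\min\{\minRad(p),\tfrac12\dcsd(p)\}$) this proves the inequality.

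For the equality case: if $\Delta_n[p]^{-1}=\Delta_n[g_n]^{-1}=2n\tan(\pi/n)$, then all the inequalities above are equalities. Equality in the Jensen step forces all exterior angles equal, $\phi_i\equiv\phi$, and equality in $\sum\phi_i\ge 2\pi$ forces $\phi=2\pi/n$ — so the turning is exactly $2\pi$ and $p$ is a convex planar polygon with all equal edge lengths $1/n$ and all equal exterior angles $2\pi/n$, which is precisely a regular $n$-gon. \textbf{The main obstacle} I anticipate is making the "total turning $\ge 2\pi$" argument clean and rigorous for an arbitrary (possibly non-planar, possibly non-simple) closed equilateral polygon, together with the rigidity statement that equal edge lengths, equal exterior angles $2\pi/n$, and closure force planarity and regularity; the curvature and $\dcsd$ computations for $g_n$ are routine calculus exercises by comparison.
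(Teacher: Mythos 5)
Your proposal is correct and follows essentially the same route as the paper: both rest on Fenchel's theorem for closed polygons ($\sum_i\phi_i\ge 2\pi$), deduce $\maxCurv(p)\ge 2n\tan(\pi/n)=\Delta_n[g_n]^{-1}$ (the paper by pigeonholing the exterior angles, you by a Jensen step --- an immaterial difference), and settle the equality case via the rigidity half of Fenchel (total turning exactly $2\pi$ forces a convex planar polygon, and an equilateral one with all exterior angles $2\pi/n$ is regular). One small correction: for even $n$ the antipodal edge-midpoints are doubly critical with $\dcsd(g_n)=1/(n\tan(\pi/n))=2\minRad(g_n)$ exactly, so $\dcsd(g_n)$ does not ``comfortably exceed'' $2\minRad(g_n)$; but only the non-strict inequality is needed to conclude $\Delta_n[g_n]^{-1}=2n\tan(\pi/n)$, so your argument stands.
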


\textbf{Acknowledgement }
The author thanks H. von der Mosel, for his interest and many useful suggestions and remarks.

\section{Prelude in $\Gamma$-convergence }\label{sectiongammaconvergence}

In this section we want to acquaint the reader with $\Gamma$-convergence and repeat its (to us) most important property.

\begin{definition}[$\,\Gamma$-convergence]
	Let $X$ be a topological space, $\mathcal{F},\mathcal{F}_{n}:X\to\overline \R\vcentcolon=\R\cup\{\pm\infty\}$. We say that \emph{$\mathcal{F}_{n}$ $\Gamma$-converges to $\mathcal{F}$}, 
	written $\mathcal{F}_{n}\stackrel{\Gamma}{\to}\mathcal{F}$, if
	\begin{itemize}
		\item
			for every $x_{n}\to x$ holds $\mathcal{F}(x)\leq \liminf_{n\to\infty}\mathcal{F}_{n}(x_{n}$),
		\item
			for every $x\in X$ there are $x_{n}\to x$ with $\limsup_{n\to\infty}\mathcal{F}_{n}(x_{n})\leq \mathcal{F}(x)$.
	\end{itemize}
\end{definition}

The first inequality is usually called \emph{$\liminf$ inequality} and the second one \emph{$\limsup$ inequality}. Note, that if the functionals are only defined on subspaces $Y$ and $Y_{n}$
of $X$ and we extend the functionals by plus infinity on the rest of $X$ it is enough to show the $\liminf$ inequality holds for every $x_{n}\in Y_{n}$, $x\in X$ 
and the $\limsup$ inequality for $x\in Y$ and $x_{n}\in Y_{n}$ in order to establish $\mathcal{F}_{n}\stackrel{\Gamma}{\to}\mathcal{F}$. 
In our application we have $X=\mathcal{C}(\mathcal{K})$, $Y=\mathcal{C}(\mathcal{K})\cap C^{1,1}(\sphere_{1},\R^{d})$ and $Y_{n}=\mathcal{P}_{n}(\mathcal{K})$.\\

This convergence is modeled in such a way that it allows the convergence of minimizers and even almost minimizers of the functionals $\mathcal{F}_{n}$ to minimizers of the limit functional $\mathcal{F}$.

\begin{theorem}[Convergence of minimizers, {\cite[Corollary 7.17, p.78]{Dal-Maso1993a}}]\label{dalmasoproposition}
	Let $\mathcal{F}_{n},\mathcal{F}:X\to\overline \R$ with $\mathcal{F}_{n}\stackrel{\Gamma}{\to}\mathcal{F}$. Let $\epsilon_{n}>0$, $\epsilon_{n}\to 0$ and $x_{n}\in X$ 
	with $|\inf \mathcal{F}_{n}-\mathcal{F}_{n}(x_{n})|\leq \epsilon_{n}$. If $x_{n_{k}}\to x$ then
	\begin{align*}
		\mathcal{F}(x)=\inf \mathcal{F}=\lim_{k\to\infty}\mathcal{F}_{n}(x_{n_{k}}).
	\end{align*}
\end{theorem}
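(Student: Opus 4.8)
The plan is to verify the two defining inequalities of $\Gamma$-convergence directly. By the reduction recorded in Section~\ref{sectiongammaconvergence} (with $X=\mathcal{C}(\mathcal{K})$, $Y=\mathcal{C}(\mathcal{K})\cap C^{1,1}(\sphere_{1},\R^{3})$ and $Y_{n}=\mathcal{P}_{n}(\mathcal{K})$) it is enough to establish the $\liminf$ inequality for sequences $p_{n}\in\mathcal{P}_{n}(\mathcal{K})$ with $p_{n}\to\gamma\in\mathcal{C}(\mathcal{K})$ in $W^{1,\infty}$, and to produce a recovery sequence only for $\gamma\in\mathcal{C}(\mathcal{K})\cap C^{1,1}(\sphere_{1},\R^{3})$, since $\Delta^{-1}[\gamma]=+\infty$ whenever $\gamma\notin C^{1,1}$ (there $\Delta[\gamma]\le\minRad(\gamma)=\maxCurv(\gamma)^{-1}=0$). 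Throughout I exploit that $\Delta_{n}^{-1}=\max\{\maxCurv,2\dcsd^{-1}\}$ and $\Delta^{-1}=\max\{\maxCurv,2\dcsd^{-1}\}$ are pointwise maxima of two functionals, so that the elementary bounds $\liminf_{n}\max\{a_{n},b_{n}\}\ge\max\{\liminf_{n}a_{n},\liminf_{n}b_{n}\}$ and $\limsup_{n}\max\{a_{n},b_{n}\}\le\max\{\limsup_{n}a_{n},\limsup_{n}b_{n}\}$ let me treat the curvature term and the self-distance term separately in each inequality.

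For the $\liminf$ inequality I may assume $\liminf_{n}\Delta_{n}^{-1}[p_{n}]<\infty$ and pass to a subsequence realising it, so that $\maxCurv(p_{n})$ is bounded. The curvature part is then exactly Proposition~\ref{compactness}: the limit satisfies $\gamma\in C^{1,1}$ with $\maxCurv(\gamma)\le\liminf_{n}\maxCurv(p_{n})$, which is the lower semicontinuity of $\maxCurv$. For the self-distance part I must show $\dcsd(\gamma)\ge\limsup_{n}\dcsd(p_{n})$, equivalently the lower semicontinuity of $\dcsd^{-1}$. Here I use that $W^{1,\infty}$ convergence is $C^{1}$ convergence, i.e. the edge directions $p_{n}'$ converge uniformly to $\gamma'$: starting from a smooth doubly critical pair $(\gamma(t_{0}),\gamma(s_{0}))$ realising $\dcsd(\gamma)$, I produce for large $n$ a discrete doubly critical pair of $p_{n}$ with endpoints near $\gamma(t_{0})$ and $\gamma(s_{0})$ and distance at most $\dcsd(\gamma)+o(1)$; minimising over $\dcrit(p_{n})$ then yields $\dcsd(p_{n})\le\dcsd(\gamma)+o(1)$.

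For the $\limsup$ inequality I fix $\gamma\in\mathcal{C}(\mathcal{K})\cap C^{1,1}$ and take $p_{n}$ to be the equilateral polygon inscribed in $\gamma$ at $n$ suitably chosen nodes, rescaled to unit length; then $p_{n}\to\gamma$ in $W^{1,\infty}$. The estimate $\limsup_{n}\maxCurv(p_{n})\le\maxCurv(\gamma)$ follows because the exterior angle $\phi_{i}$ at each vertex is bounded by the total curvature of the subtended arc, so that $2\tan(\phi_{i}/2)$ divided by the averaged adjacent edge lengths is asymptotically dominated by $\|\gamma''\|_{L^{\infty}}$. The estimate $\liminf_{n}\dcsd(p_{n})\ge\dcsd(\gamma)$ follows symmetrically, by matching each pair in $\dcrit(p_{n})$ with a nearby smooth doubly critical pair of $\gamma$ via the same $C^{1}$ convergence. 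Since $\gamma$ has positive thickness, for $n$ large the inscribed $p_{n}$ is ambient isotopic to $\gamma$ and hence lies in $\mathcal{P}_{n}(\mathcal{K})$; the max-decomposition then gives $\limsup_{n}\Delta_{n}^{-1}[p_{n}]\le\Delta^{-1}[\gamma]$.

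The main obstacle is the two-sided control of $\dcsd$, because the set $\dcrit$ of doubly critical pairs need not vary continuously under perturbation: pairs may appear, merge, or become degenerate. The key is to invoke the positive thickness of the limit, which through the circle comparison of Proposition~\ref{schurtheorem} furnishes a uniform tubular neighbourhood, confining candidate doubly critical pairs and preventing the minimal self-distance from jumping. At a transverse pair the orthogonality relations $\langle\gamma'(t),\gamma(t)-\gamma(s)\rangle=\langle\gamma'(s),\gamma(t)-\gamma(s)\rangle=0$ persist under $C^{1}$ perturbation by the implicit function theorem, while the degenerate pairs are absorbed by a compactness argument on the closed set of candidate pairs. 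By comparison the equilateral closure of the recovery polygons is merely bookkeeping: the node parameters are adjusted by a continuity argument so that $n$ equal chords close up while preserving $p_{n}\to\gamma$ in $W^{1,\infty}$.
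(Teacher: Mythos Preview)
You are proving the wrong statement. The theorem labelled \texttt{dalmasoproposition} is the abstract fact that $\Gamma$-convergence $\mathcal{F}_{n}\stackrel{\Gamma}{\to}\mathcal{F}$ together with convergence of a subsequence of almost minimizers $x_{n_{k}}\to x$ forces $\mathcal{F}(x)=\inf\mathcal{F}=\lim_{k}\mathcal{F}_{n_{k}}(x_{n_{k}})$. It is a statement about arbitrary functionals on an arbitrary topological space; the paper does not prove it but simply cites \cite[Corollary~7.17]{Dal-Maso1993a}. Its proof is a two-line use of the defining inequalities: by the $\liminf$ inequality $\mathcal{F}(x)\leq\liminf_{k}\mathcal{F}_{n_{k}}(x_{n_{k}})\leq\liminf_{k}(\inf\mathcal{F}_{n_{k}}+\epsilon_{n_{k}})$, and for any $y\in X$ the recovery sequence $y_{n}\to y$ gives $\mathcal{F}(y)\geq\limsup_{n}\mathcal{F}_{n}(y_{n})\geq\limsup_{n}\inf\mathcal{F}_{n}$; combining the two yields $\mathcal{F}(x)\leq\inf\mathcal{F}$ and the limit statement.

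What you have written is instead a sketch of Theorem~\ref{gammaconvergence}, namely that $\Delta_{n}^{-1}\stackrel{\Gamma}{\to}\Delta^{-1}$ on $\mathcal{C}(\mathcal{K})$. That is the \emph{hypothesis} of the cited result in the intended application, not its conclusion. Your outline of the $\liminf$ and $\limsup$ inequalities for the specific thickness functionals (splitting into curvature and doubly critical self-distance, invoking Proposition~\ref{compactness}, using inscribed equilateral polygons as recovery sequence) is broadly in line with how the paper treats Theorem~\ref{gammaconvergence} in Sections~\ref{sectionliminf} and~\ref{sectionlimsup}, but it simply does not address the statement you were asked to prove.
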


In order to use this result in our application where we want to show that minimizers of the discrete functional $\mathcal{F}_{n}$ converge to minimizers of the ``smooth'' functional $\mathcal{F}$
we do need $\mathcal{F}_{n}\stackrel{\Gamma}{\to}\mathcal{F}$ as well as an additional compactness result that show that there is subsequence $x_{n_{k}}\to x$ with $x\in X$.

\section{Schur's Theorem for polygons }\label{sectionSchur}

In this section we want to estimate for how many vertices a polygon that starts tangentially at a sphere stays out of this sphere if the curvature of the polygon is bounded in terms of the radius of the sphere.
It turns out that make such an estimate we need Schur's Comparison Theorem for a polygon and a circle. This theorem for smooth curves basically says that if the curvature of a smooth curve 
is strictly smaller than the curvature of a convex planar curve then the endpoint distance of the planar convex curve is strictly smaller than the endpoint distance of the other curve.
There already is a version of this theorem for
classes of curves including polygons, see \cite[Theorem 5.1]{Sullivan2008a}, however, with the drawback that the hypotheses there do not allow to compare polygons and smooth curves.

\begin{proposition}[Schur's Comparison Theorem]\label{schurtheorem}
	Let $p\in C^{0,1}(I,\R^{d})$, $I=[0,L]$ be the arc length parametrisation of a polygon with $\maxCurv_{2}(p)\leq K$ and $KL\leq \pi$. Let $\eta$ be the arc length parametrisation of a
	circle of curvature $K$. Then
	\begin{align*}
		\abs{\eta(L)-\eta(0)}< \abs{p(L)-p(0)}.
	\end{align*}
\end{proposition}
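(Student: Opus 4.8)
The plan is to mimic the classical proof of Schur's theorem, in which one writes the endpoint displacement of each curve as the integral of its unit tangent and compares the two via the angle each tangent makes with a fixed reference direction. Since a polygon's tangent is only piecewise constant, I would replace the ``turning angle'' of a smooth curve by the cumulative exterior angle of the polygon. Concretely, let $p$ have vertices $x_0,\dots,x_m$ with edge directions $T_j$ (unit vectors), edge lengths $\ell_j$, and exterior angles $\phi_j=\measuredangle(T_{j-1},T_j)$ at the interior vertex $x_j$. The hypothesis $\maxCurv_2(p)\le K$ together with the definition of $\kappa_{d,2}$ gives $\phi_j\le K\,\tfrac{\ell_{j-1}+\ell_j}{2}$, and summing these telescoping half-edge contributions bounds the total turning by $KL\le\pi$. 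Parametrise the comparison circle $\eta$ of curvature $K$ so that its turning is synchronised with $p$'s in arc length: at arc length $s$ the circle's tangent $\tau(s)$ has turned by exactly $Ks$, and I choose the circle so that at each breakpoint the accumulated turning of $\eta$ equals the accumulated exterior angle of $p$. The key pointwise fact is then that on the $j$-th edge the angle between $T_j$ and the chord direction $\frac{\eta(L)-\eta(0)}{|\eta(L)-\eta(0)|}$ is at most the angle between $\tau(s)$ and that same chord direction for the corresponding arc-length range, because the circle ``turns away'' from its chord at least as fast; since all these angles lie in $[0,\pi]$ where cosine is decreasing, $\langle T_j, e\rangle \ge \cos(\text{that angle})$ with $e$ the unit chord of the circle.

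With this in hand I would carry out the comparison. Project the polygon's displacement onto the direction $e=\frac{\eta(L)-\eta(0)}{|\eta(L)-\eta(0)|}$:
\begin{align*}
	\abs{p(L)-p(0)} \ge \langle p(L)-p(0), e\rangle = \sum_{j} \ell_j \langle T_j, e\rangle \ge \sum_j \ell_j \cos\theta_j,
\end{align*}
where $\theta_j$ is the angle the circle's tangent makes with $e$ over the $j$-th arc-length interval (at its midpoint, say). On the other hand, for the circle itself,
\begin{align*}
	\abs{\eta(L)-\eta(0)} = \langle \eta(L)-\eta(0), e\rangle = \int_0^L \langle \tau(s), e\rangle\,\dd s = \int_0^L \cos\theta(s)\,\dd s,
\end{align*}
where $\theta(s)$ is the (affine, slope $K$) angle between $\tau(s)$ and $e$. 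The synchronisation of turning was arranged precisely so that the piecewise-constant step function $s\mapsto\theta_j$ stays pointwise no larger than the convex/concave profile $\theta(s)$ on the range where $\cos$ is relevant — this uses that the polygon's turning lags the circle's turning within each edge because the angle is realised only at the vertex, not spread out — hence $\cos\theta_j\ge\cos\theta(s)$ on each subinterval, giving $\abs{p(L)-p(0)}\ge\abs{\eta(L)-\eta(0)}$. Strictness then comes from observing that equality in the projection step forces every $T_j$ to equal $e$, i.e. $p$ is a straight segment, which is incompatible with $\abs{\eta(L)-\eta(0)}=|p(L)-p(0)|$ unless $KL=0$; alternatively, since $KL\le\pi$ and the circular arc is strictly shorter than its subtended chord replacement only in the degenerate case, some strict inequality in the $\cos$ comparison survives.

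The main obstacle I anticipate is making the ``synchronisation'' rigorous: one must define the correspondence between arc length on $p$ and turning angle in a way that simultaneously (a) respects the per-vertex curvature bound $\phi_j\le K\tfrac{\ell_{j-1}+\ell_j}{2}$, (b) keeps the total turning $\le\pi$ so that all intermediate angles $\theta_j,\theta(s)$ stay in $[0,\pi]$ and $\cos$ is monotone, and (c) yields the pointwise domination $\theta_j\le\theta(s)$ on the correct subintervals. The natural device is to assign to the $j$-th edge the arc-length subinterval of the circle of the same length $\ell_j$, and to check that because the exterior angle is ``concentrated'' at the far vertex while the circle bends uniformly, the polygon's tangent on edge $j$ is in fact \emph{closer} to the chord direction than the circle's tangent on the matching subinterval — this is the place where the factor-of-$\tfrac12$ averaging in $\kappa_{d,2}$ and the bound $KL\le\pi$ must be used together, and where a careless choice of correspondence breaks the argument. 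Once the correspondence is fixed correctly, the rest is a monotonicity computation.
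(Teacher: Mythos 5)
Your overall strategy --- a Chern-style projection of the endpoint displacement onto a single reference direction --- is genuinely different from the paper's argument, which instead expands $\abs{p(L)-p(0)}^{2}=\int_{I}\int_{I}\langle p'(s),p'(u)\rangle\,\dd s\,\dd u$ into a double sum over pairs of edges and compares it term by term with the same expansion for the circle. The projection route can be made to work, but two of the steps you lean on fail as written. First, your ``key pointwise fact'' compares $\measuredangle(T_{j},e)$ with $\measuredangle(\tau(s),e)$, where $e$ is the circle's chord direction. This is vacuous: the polygon and the circle sit in $\R^{d}$ in unrelated positions, so nothing constrains the angle between an edge $T_{j}$ of $p$ and a direction attached to $\eta$. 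In the classical argument one projects onto the \emph{polygon's} tangent $T_{j_{0}}$ at the parameter $s_{0}$ where the \emph{circle's} tangent is parallel to the circle's chord, bounds $\measuredangle(T_{j},T_{j_{0}})$ by the cumulative exterior angle between edges $j_{0}$ and $j$, and only then invokes the curvature hypothesis to dominate that cumulative angle by the circle's turning $\measuredangle(\tau(s),\tau(s_{0}))=\measuredangle(\tau(s),e)$. Without this intermediate step the two sides of your inequality are not comparable.

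Second, the pointwise domination $\theta_{j}\le\theta(s)$ on the matching subinterval cannot hold: the bound $\phi_{j}\le K\tfrac{\ell_{j-1}+\ell_{j}}{2}$ synchronises the polygon's turning with the circle's at the \emph{midpoints} of the edges, so on the half of each subinterval nearer the reference point the circle has turned strictly less than the polygon and $\cos\theta_{j}\ge\cos\theta(s)$ reverses. The repair is to integrate the circle's tangent over each subinterval exactly: $\int_{I_{j}}\tau(s)\,\dd s=\eta(a_{j})-\eta(a_{j-1})$ is a chord whose direction is the tangent at the midpoint of the arc and whose length is \emph{strictly} less than $\abs{I_{j}}$, and one compares chord against edge. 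This inscribed-polygon comparison is exactly the device the paper uses (there $\varphi_{k,k+1}=K\tfrac{\abs{I_{k}}+\abs{I_{k+1}}}{2}$), and it is also the true source of the strict inequality in the conclusion --- the chord of a nondegenerate circular arc is strictly shorter than the arc --- rather than the degeneration you invoke. Your strictness argument is also incorrect on its own terms: equality in $\abs{p(L)-p(0)}\ge\langle p(L)-p(0),e\rangle$ forces only the total displacement, not every edge direction $T_{j}$, to be parallel to $e$.
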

\begin{proof}
	Let $p(a_{k})$ be the vertices of the polygon, $a_{0}=0$.
	We write $\alpha_{i,j}\vcentcolon=\measuredangle(p'(t_{i}),p'(t_{j}))$, where $t_{k}$ is an interior point of $I_{k}\vcentcolon=[a_{k-1},a_{k}]$.
	From the curvature bound we get $\alpha_{i,i+1}\leq K\frac{\abs{I_{i}}+\abs{I_{i+1}}}{2}$ and hence for $i\leq j$ we can estimate
	$\alpha_{i,j}\leq \sum_{k=i}^{j-1}\alpha_{k,k+1}\leq \frac{K}{2}\sum_{k=i}^{j-1}(\abs{I_{k}}+\abs{I_{k+1}})$. Now,
	\begin{align*}
		\MoveEqLeft
		\abs{p(L)-p(0)}^{2}=\int_{I}\int_{I}\langle p'(s),p'(u) \rangle\,\dd s\,\dd u
		=\sum_{i,j=1}^{n}\int_{I_{i}}\int_{I_{j}}\cos(\alpha_{i,j})\\
		&=\sum_{\substack{i,j=1\\i=j}}^{n}\abs{I_{i}}\abs{I_{j}}
		+2\sum_{\substack{i,j=1\\i<j}}^{n}\abs{I_{i}}\abs{I_{j}}\cos(\alpha_{i,j}).
	\end{align*}
	\begin{figure}
		\centering 
		\includegraphics[width=0.5\textwidth]{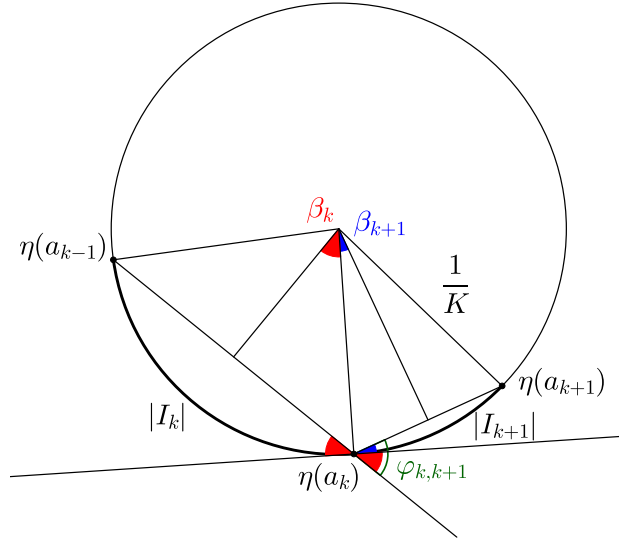}
		\caption{The angle $\phi_{k,k+1}$ in the proof of Proposition \ref{schurtheorem}.}
		\label{anglesinsribedcircle}
	\end{figure}
	Similarly,
	\begin{align*}
		\MoveEqLeft
		\abs{\eta(L)-\eta(0)}^{2}=\int_{I}\int_{I}\langle \eta'(s),\eta'(u) \rangle\,\dd s\,\dd u\\
		&=\sum_{\substack{i,j=1\\i=j}}^{n}\int_{I_{i}}\int_{I_{j}}\langle \eta'(s),\eta'(u) \rangle\,\dd s\,\dd u
		+2\sum_{\substack{i,j=1\\i<j}}^{n}\int_{I_{i}}\int_{I_{j}}\langle \eta'(s),\eta'(u) \rangle\,\dd s\,\dd u\\
		&\leq \sum_{\substack{i,j=1\\i=j}}^{n}\abs{I_{i}}\abs{I_{i}}
		+2\sum_{\substack{i,j=1\\i<j}}^{n}\langle \eta(a_{j})-\eta(a_{j-1}),\eta(a_{i})-\eta(a_{i-1}) \rangle.
	\end{align*}
	Write $\varphi_{i,j}\vcentcolon=\measuredangle(\eta(a_{j})-\eta(a_{j-1}),\eta(a_{i})-\eta(a_{i-1}))$. Then $\varphi_{i,j}=\sum_{k=i}^{j-1}\varphi_{k,k+1}$, because the points
	$\eta(a_{i})$ form a convex plane polygon. From Figure \ref{anglesinsribedcircle} we see that $\varphi_{k,k+1}=K\frac{\abs{I_{k}}+\abs{I_{k+1}}}{2}$ and hence $\alpha_{i,j}\leq\varphi_{i,j}$.
	This allows us to continue our estimate
	\begin{align*}
		\MoveEqLeft
		\abs{\eta(L)-\eta(0)}^{2}
		\leq\sum_{\substack{i,j=1\\i=j}}^{n}\abs{I_{i}}\abs{I_{i}}
		+2\sum_{\substack{i,j=1\\i<j}}^{n} \abs{\eta(a_{j})-\eta(a_{j-1})}\abs{\eta(a_{i})-\eta(a_{i-1})}\cos(\varphi_{i,j})\\
		&<\sum_{\substack{i,j=1\\i=j}}^{n}\abs{I_{i}}\abs{I_{i}}
		+2\sum_{\substack{i,j=1\\i<j}}^{n} \abs{I_{i}}\abs{I_{j}}\cos(\varphi_{i,j})
		\leq \sum_{\substack{i,j=1\\i=j}}^{n}\abs{I_{i}}\abs{I_{j}}
		+2\sum_{\substack{i,j=1\\i<j}}^{n}\abs{I_{i}}\abs{I_{j}}\cos(\alpha_{i,j})
		=\abs{p(L)-p(0)}^{2}.
	\end{align*}
\end{proof}

As we only need $\varphi_{1,n}=2^{-1}K\sum_{i=1}^{n-1}(\abs{I_{i}}+\abs{I_{i+1}})\leq \pi$ we can make do with $KL\leq \pi+2^{-1}K(\abs{I_{1}}+\abs{I_{n}})$ instead of $KL\leq \pi$.

\begin{corollary}[Tangential polygon stays outside of sphere]\label{polygonstaysoutsideofsphere}
	Let $p$ be an equilateral polygon of length $L$ with $\maxCurv_{2}(p)\leq K$ and $KL\leq \frac{\pi}{2}$.
	If $p$ touches a sphere of curvature $K$ at an endpoint then all other vertices of $p$ lie outside the sphere.
\end{corollary}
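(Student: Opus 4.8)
The plan is to combine Schur's Comparison Theorem (Proposition~\ref{schurtheorem}) with a direct estimate of how far a tangential polygon of controlled curvature can bend back towards the centre of the sphere. Write $c$ for the centre and $\rho\vcentcolon=K^{-1}$ for the radius of the sphere, let $v$ be the outer unit normal of the sphere at the contact vertex $p(0)$, i.e.\ $v=\rho^{-1}(p(0)-c)$, and put $u\vcentcolon=p'(0^{+})$, the direction of the first edge; the touching hypothesis means $\langle u,v\rangle=0$. Denote the vertices by $p(a_{k})$, $a_{0}=0$, with common edge length $\ell$, so that $\maxCurv_{2}(p)\leq K$ forces the exterior angle at every vertex to be at most $K\ell$, and hence $\measuredangle(p'(t),u)\leq Ka_{k-1}\leq Kt\leq KL\leq\pi/2$ for $t$ in the $k$-th edge $I_{k}$. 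Fixing a vertex $p(a_{k})$ with $k\geq 1$, it is enough to show $\abs{p(a_{k})-c}>\rho$.

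Since $\abs{p(0)-c}=\rho$, expanding the square gives
\begin{align*}
	\abs{p(a_{k})-c}^{2}-\rho^{2}=\abs{p(a_{k})-p(0)}^{2}-2\rho\,\bigl\langle p(0)-p(a_{k}),v\bigr\rangle .
\end{align*}
For the second term I would use that $u\perp v$: decomposing $p'(t)=(\cos\theta)u+(\sin\theta)w$ with $w\perp u$ and $\theta=\measuredangle(p'(t),u)\leq Kt\leq\pi/2$ gives $\abs{\langle p'(t),v\rangle}=(\sin\theta)\abs{\langle w,v\rangle}\leq\sin\theta\leq\sin(Kt)$, so that
\begin{align*}
	\bigl\langle p(0)-p(a_{k}),v\bigr\rangle=-\int_{0}^{a_{k}}\langle p'(t),v\rangle\,\dd t\leq\int_{0}^{a_{k}}\sin(Kt)\,\dd t=\rho\bigl(1-\cos(Ka_{k})\bigr).
\end{align*}
For the first term I would apply Proposition~\ref{schurtheorem} to the sub-polygon $p|_{[0,a_{k}]}$, which is again an arc-length parametrised polygon with $\maxCurv_{2}\leq K$ and $Ka_{k}\leq KL\leq\pi/2<\pi$, comparing it with a circle of curvature $K$; the chord of such a circle over an arc of length $a_{k}$ has length $2\rho\sin(Ka_{k}/2)$, and Schur's Theorem yields the \emph{strict} inequality $\abs{p(a_{k})-p(0)}>2\rho\sin(Ka_{k}/2)$, hence $\abs{p(a_{k})-p(0)}^{2}>4\rho^{2}\sin^{2}(Ka_{k}/2)=2\rho^{2}\bigl(1-\cos(Ka_{k})\bigr)$.

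Plugging both estimates into the displayed identity gives $\abs{p(a_{k})-c}^{2}-\rho^{2}>2\rho^{2}(1-\cos(Ka_{k}))-2\rho^{2}(1-\cos(Ka_{k}))=0$, which is exactly the assertion. The point I expect to be the crux is that the naive turning estimate alone, $\abs{p(a_{k})-p(0)}\geq\rho\sin(Ka_{k})$, is \emph{not} enough: it only gives $\rho^{2}\sin^{2}(Ka_{k})=2\rho^{2}(1-\cos(Ka_{k}))\cdot\tfrac{1+\cos(Ka_{k})}{2}<2\rho^{2}(1-\cos(Ka_{k}))$, so one loses by exactly the wrong margin. One genuinely needs the sharper chord bound $2\rho\sin(Ka_{k}/2)$ supplied by Schur's Comparison Theorem, and it is precisely the built-in strictness of Proposition~\ref{schurtheorem} that turns the borderline equality into the strict inequality ``outside the sphere''. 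Once this is seen, the remaining verifications — that $KL\leq\pi/2$ keeps all relevant angles in $[0,\pi/2]$ and that the sub-polygon inherits the curvature bound — are routine.
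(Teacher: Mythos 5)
Your proof is correct and follows essentially the same route as the paper: it expands $\abs{p(a_{k})-c}^{2}$ into a chord term plus a cross term, bounds the chord term from below by the comparison circle's chord via Proposition~\ref{schurtheorem}, and bounds the cross term using the cumulative turning angle $\measuredangle(p'(t),u)\leq Kt$. The only cosmetic difference is that you evaluate the circle's chord length $2\rho\sin(Ka_{k}/2)$ and the cross term $\rho(1-\cos(Ka_{k}))$ in closed form, whereas the paper keeps them as $\abs{\eta(a_{k})-\eta(0)}$ and $\langle\eta(a_{k})-\eta(0),\eta(0)\rangle$ and compares edge directions via spherical distances.
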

\begin{proof}
	\begin{figure}
		\centering 
		\includegraphics[width=0.3\textwidth]{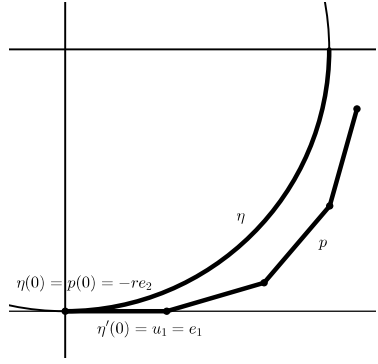}
		\caption{The situation in the proof of Corollary \ref{polygonstaysoutsideofsphere}.}
		\label{picturepolygonstaysoutsideofsphere}
	\end{figure}
	Without loss of generality we might assume that the sphere is centred at the origin and that $p$ touches the sphere at $p(0)=-re_{2}$ with $u_{1}=e_{1}$, where $r=K^{-1}$
	and $u_{i}\in \sphere^{d-1}$ are the directions of the edges.
	We have to show that $\abs{p(a_{k})}> r$ for $k=1,\ldots, n$. Let $\eta$ be the arc length parametrisation of the circle of radius $r$ about the origin in the $e_{1},e_{2}$ plane, starting 
	at $\eta(0)=p(0)$ with $\eta'(0)=u_{1}=e_{1}$.
	On the unit sphere equipped with the great circle distance, i.e., angle, we have $\frac{\pi}{2}=d(e_{1},e_{2})\leq d(e_{1},u_{1})+\sum_{i=1}^{k-1}d(u_{i},u_{i+1})+ d(u_{k},e_{2})$ 
	and hence $u_{1}=e_{1}$ and the curvature bound imply
	\begin{align*}
		\MoveEqLeft
		d(\eta'(a_{k-1}),e_{2})=d(e_{1},e_{2})-d(e_{1},\eta'(a_{k-1}))=\frac{\pi}{2}-d(\eta'(0),\eta'(a_{k-1}))=\frac{\pi}{2}-\int_{0}^{a_{k-1}}\abs{\eta''}\,\dd t\\
		&=\frac{\pi}{2}-Ka_{k-1}=\frac{\pi}{2}-K\sum_{i=1}^{k-1}\frac{\abs{I_{i}}+\abs{I_{i+1}}}{2}\leq \frac{\pi}{2}-\sum_{i=1}^{k-1}d(u_{i},u_{i+1})\leq d(u_{k},e_{2}),
	\end{align*}
	since $\eta'\vert_{[0,L]}$ is a parametrisation of the unit circle in the $e_{1},e_{2}$ plane from $e_{1}$ to $e_{2}$ with constant speed $\abs{\eta''}=K$.
	Now, we can estimate
	\begin{align}\label{estimateanglestartsecant}
		\begin{split}
			\MoveEqLeft
			\big\langle p(a_{k})-p(0),p(0)\big\rangle = \Big\langle \sum_{i=1}^{k}\abs{I_{i}}u_{i},-re_{2}\Big\rangle
			=-r\sum_{i=1}^{k} \abs{I_{i}} \cos(d(u_{i},e_{2})\\
			&\geq -r\sum_{i=1}^{k} \abs{I_{i}} \cos(d(\eta'(a_{i-1}),e_{2})
			\geq -r\sum_{i=1}^{k} \int_{I_{i}}\cos(d(\eta'(t),e_{2}))\,\dd t\\
			&=\int_{0}^{a_{k}}\langle \eta'(t),-re_{2}\rangle\,\dd t
			=\big\langle \eta(a_{k})-\eta(0),\eta(0)\big\rangle,
		\end{split}
	\end{align}
	as $d(\eta'(t),e_{2})\leq d(\eta'(a_{i-1}),e_{2})$ for $t\in I_{i}$. Using Schur's Comparison Theorem, Proposition \ref{schurtheorem}, and \eqref{estimateanglestartsecant} we conclude
	\begin{align*}
		\MoveEqLeft
		\abs{p(a_{k})}^{2}=\abs{p(a_{k})-p(0)+p(0)}^{2}
		=\abs{p(a_{k})-p(0)}^{2}+2\langle p(a_{k})-p(0),p(0)\rangle +\abs{p(0)}^{2}\\
		&> \abs{\eta(a_{k})-\eta(0)}^{2}+2\langle \eta(a_{k})-\eta(0),\eta(0)\rangle +\abs{\eta(0)}^{2}
		=\abs{\eta(a_{k})}^{2}=r^{2}.
	\end{align*}
\end{proof}

\section{Compactness }\label{sectioncompactness}

Note, that since the domain is bounded we have $C^{0,1}(\sphere_{1},\R^{d})=W^{1,\infty}(\sphere_{1},\R^{d})$.

\begin{proposition3}[Compactness]
	Let $p_{n}\in \mathcal{P}_{n}(\mathcal{K})$ bounded in $L^{\infty}$ with $\liminf_{n\to\infty}\maxCurv(p_{n})<\infty$. Then there is $\gamma\in C^{1,1}(\sphere_{1},\R^{d})$ and a subsequence
	\begin{align*}
		p_{n_{k}}\xrightarrow[k\to\infty]{W^{1,\infty}(\sphere_{1},\R^{d})} \gamma\in\mathcal{C} \quad\text{with}\quad \maxCurv(\gamma)\leq \liminf_{n\to\infty}\maxCurv(p_{n}).
	\end{align*}
\end{proposition3}
\begin{proof}
	\textbf{Step 1}
		Without loss of generality, by taking subsequences if necessary, we might assume $\maxCurv(p_{n})\leq K<\infty$ for all $n\in\N$. As $p_{n}$ is bounded in $W^{1,\infty}$ there is a subsequence 
		(for notational convenience denoted by the same indices) converging to $\gamma\in W^{1,2}(\sphere_{1},\R^{d})$ strongly in $C^{0}(\sphere_{1},\R^{d})$ and weakly in $W^{1,2}(\sphere_{1},\R^{d})$.
		First we have to show that $\gamma$ is also parametrised by arc length, i.e., $\abs{\gamma'}=1$ a.e.. Since
		$\abs{p_{n}'}=1$ a.e. testing with $\varphi=\gamma'\cdot\chi_{\{\abs{\gamma'}>1\}}$, $\chi_{A}$ the characteristic function of $A$, yields
		\begin{align*}
			\MoveEqLeft
			0\leftarrow
			\int_{\sphere_{1}}\langle p_{n}'-\gamma',\varphi \rangle\,\dd t
			=\int_{\{\abs{\gamma'}>1\}}\langle p_{n}'-\gamma',\gamma' \rangle\,\dd t\\
			&\leq \int_{\{\abs{\gamma'}>1\}}(\abs{p_{n}'}\abs{\gamma'}-\abs{\gamma'}^{2})\,\dd t
			= \int_{\{\abs{\gamma'}>1\}}\abs{\gamma'}\underbrace{(1-\abs{\gamma'})}_{<0}\,\dd t
		\end{align*}
		and thus $\abs{\gamma'}\leq 1=\abs{p_{n}'}$ a.e.. Additionally, we know from Schur's Theorem, Proposition \ref{schurtheorem}, that if $\eta$ is the arc length parametrisation of a circle of curvature $K$, 
		then for a.e. $t$ holds
		\begin{align*}
			\MoveEqLeft
			\abs{\gamma'(t)}=\lim_{h\to 0}\Big|\frac{\gamma(t+h)-\gamma(t)}{h}\Big|\\
			&\geq \lim_{h\to 0}\lim_{n\to\infty}\Big(\Big|\frac{p_{n}(t+h)-p_{n}(t)}{h}\Big|-\Big|\frac{(\gamma(t+h)-p_{n}(t+h))-(\gamma(t)-p_{n}(t))}{h}\Big|\Big)\\
			&=\lim_{h\to 0}\lim_{n\to\infty}\Big|\frac{p_{n}(t+h)-p_{n}(t)}{h}\Big|
			\geq \lim_{h\to 0}\Big|\frac{\eta(t+h)-\eta(t)}{h}\Big|=\abs{\eta'(t)}=1.
		\end{align*}
	\textbf{Step 2}		
		Denote by $p'^{-}$ and $p'^{+}$ the left and right derivative of a polygon. From the curvature bound and Corollary \ref{polygonstaysoutsideofsphere} we know that any sphere
		of curvature $K$ attached tangentially to the direction $p_{n}'^{+}(t)$ at a vertex $p_{n}(t)$, and thus a whole horn torus, cannot contain any vertex of $p_{n}$ restricted to
		$(t,t+\frac{\pi}{2K})$, and the same is true for $p_{n}'^{-}(t)$ with regard to $(t-\frac{\pi}{2K},t)$. Let 
		\begin{align}\label{convergingts}
			t_{n_{k}}\to t\quad\text{such that}\quad p_{n_{k}}(t_{n_{k}})\text{ is a vertex}\quad\text{and}\quad p_{n_{k}}'^{\pm}(t_{n_{k}})\to u^{\pm}\in \sphere^{d-1}.
		\end{align}
		Then $u^{+}=u^{-}$ since
		\begin{align*}
			\MoveEqLeft
			d(u^{+},u^{-})\leq d(u^{+},p_{n_{k}}'^{+}(t_{n_{k}}))+d(p_{n_{k}}'^{+}(t_{n_{k}}),p_{n_{k}}'^{-}(t_{n_{k}}))+d(p_{n_{k}}'^{-}(t_{n_{k}}),u^{-})\\
			&\leq d(u^{+},p_{n_{k}}'^{+}(t_{n_{k}}))+\frac{K}{n_{k}}+d(p_{n_{k}}'^{-}(t_{n_{k}}),u^{-})\to 0.
		\end{align*}
		For every $t$ we can find a sequence of $t_{n_{k}}$ with \eqref{convergingts} and thanks to $p_{n_{k}}\to\gamma$ in $C^{0}$ the (two) horn tori belonging to $p_{n_{k}}(t_{n_{k}})$ 
		converge to a horn torus at $\gamma(t)$ in direction $u^{+}=u^{-}$ such that $\gamma$ does not enter the torus on the parameter range $B_{\frac{\pi}{4K}}(t)$. 
		Then according to \cite[Satz 2.14, p.26]{Gerlach2004a} holds $\gamma\in C^{1,1}(\sphere_{1},\R^{d})$ and $\maxCurv(\gamma)\leq K$. 
		Especially,	$\gamma'(t)=u^{\pm}$.\\
	\textbf{Step 3}
		If we had $||p_{n}'-\gamma'||_{L^{\infty}}\to 0$ then for every $\epsilon>0$ there is an $N$ such that for $n\geq N$ holds
		\begin{align*}
			\abs{p_{n}'^{+}(\tfrac{i}{n})-\gamma'(\tfrac{i}{n})}=\abs{p_{n}'(t)-\gamma'(\tfrac{i}{n})}\leq \abs{p_{n}'(t)-\gamma'(t)}+\abs{\gamma'(t)-\gamma'(\tfrac{i}{n})}
			\leq \epsilon +\frac{K}{n}
		\end{align*}
		for all $i\in\{0,\ldots,n-1\}$, $t\in (\frac{i}{n},\frac{i+1}{n})$. Hence,
		\begin{align}\label{uniformconvergenceassumption}
			\sup_{i=0,\ldots,n-1}\abs{p_{n}'^{+}(\tfrac{i}{n})-\gamma'(\tfrac{i}{n})}\xrightarrow[n\to\infty]{}0.
		\end{align}
		If on the other hand \eqref{uniformconvergenceassumption} holds then for every $t$ where $p_{n}(t)$ is not a vertex we find $i=i(n)$ and for every $\epsilon>0$ an $N$ such that for $n\geq N$ one has
		\begin{align*}
			\abs{p_{n}'(t)-\gamma'(t)}\leq \abs{p_{n}'^{+}(\tfrac{i}{n})-\gamma'(\tfrac{i}{n})}+\abs{\gamma'(\tfrac{i}{n})-\gamma'(t)}
			\leq \epsilon+\frac{K}{n}\to 0.
		\end{align*}
		Thus, \eqref{uniformconvergenceassumption} is equivalent to $||p_{n}'-\gamma'||_{L^{\infty}}\to 0$.		
		Assume that $||p_{n_{k}}'-\gamma'||_{L^{\infty}}\not\to 0$. Then there is a sequence of parameters $t_{n_{k}}$ as in \eqref{convergingts} with
		$p_{n_{k}}'^{+}(t_{n_{k}})\to u^{+}\not=\gamma'(t)$, which contradicts the results of Step 1. 
		Hence $p_{n_{k}}\to \gamma$ in $W^{1,\infty}$.
\end{proof}

\begin{proposition4}[Compactness II]
	Let $p_{n}\in \mathcal{P}_{n}(\mathcal{K})$ bounded in $L^{\infty}$ with $\liminf_{n\to\infty}\Delta_{n}[p_{n}]^{-1}<\infty$. 
	Then there is
	\begin{align*}
		\gamma\in \mathcal{C}(\mathcal{K})\cap C^{1,1}(\sphere_{1},\R^{d})\quad\text{with}\quad p_{n_{k}}\to \gamma\text{ in }W^{1,\infty}(\sphere_{1},\R^{d}).
	\end{align*}
\end{proposition4}
\begin{proof}
	Without loss of generality let $\Delta_{n}[p_{n}]^{-1}\leq K<\infty$ for all $n\in\N$. Note, that $\Delta_{n}[p_{n}]^{-1}<\infty$ means that $p_{n}$ is injective.
	From Proposition \ref{compactness} we know that there is a subsequence
	converging to $\gamma\in \mathcal{C}\cap C^{1,1}(\sphere_{1},\R^{d})$ in $W^{1,\infty}(\sphere_{1},\R^{d})$. It remains to be shown that $\gamma\in\mathcal{K}$.
	In order to deduce this from Proposition \ref{polygonalconvergencedoesnotchangeknotclass} we must show that $\gamma$ is injective. Assume that this is not the case.
	Then there are $s\not=t$ with $\gamma(s)=\gamma(t)$=x. Let $r_{n}\vcentcolon=||\gamma-p_{n}||_{L^{\infty}(\sphere_{1},\R^{d})}+\frac{1}{n}$, i.e., $p_{n}(s),p_{n}(t)\in B_{r_{n}}(x)$,
	and let $n$ be large enough to be sure that there are $u,v$ with $p_{n}(u),p_{n}(v)\not\in B_{4r_{n}}(x)$. The \emph{singly critical self distance}
	$\scsd(p)$ of a polygon $p$ is given by $\scsd(p)\vcentcolon=\min_{(y,z)\in \crit(p)}\abs{z-y}$, where $\crit(p)$ consists of pairs $(y,z)$ where $y=p(t)$ and $z=p(s)$ and  $s$ locally extremizes 
	$w\mapsto\abs{p(t)-p(w)}^{2}$. In \cite[Theorem 3.6]{Millett2008a} it was shown that for $p\in\mathcal{P}_{n}$ holds $\Delta_{n}[p]=\min\{\minRad(p),\scsd(p)\}$. Since the mapping 
	$f(w)=\abs{p_{n}(t)-p_{n}(w)}$ is continuous with $f(s)\leq 2r_{n}$ and $f(u),f(v)\geq 3r_{n}$ we have
	\begin{align*}
		\scsd(p_{n})\leq \min_{\alpha}f\leq f(s)=\abs{p_{n}(t)-p_{n}(s)}\leq 2r_{n}\to 0,
	\end{align*}
	where $\alpha$ is the arc on $\sphere_{1}$ from $u$ to $v$ that contains $s$. This contradicts $\Delta_{n}[p_{n}]^{-1}\leq K$. Thus, we have proven the proposition.
\end{proof}

\begin{proposition}[Convergence of polygons does not change knot class]\label{polygonalconvergencedoesnotchangeknotclass}
	Let $\gamma\in \mathcal{C}\cap C^{1,1}(\sphere_{1},\R^{d})$ be injective and $p_{n}\in \mathcal{P}_{n}(\mathcal{K})$ with $p_{n}\to \gamma$ in $W^{1,\infty}$. Then $\gamma\in \mathcal{K}$.
\end{proposition}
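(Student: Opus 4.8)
The plan is to exploit that an embedded $C^{1,1}$ curve carries a uniform tubular neighbourhood and that the hypothesised $W^{1,\infty}$-convergence forces $p_{n}$, for $n$ large, to be a \emph{section} of this neighbourhood; from this an ambient isotopy between $p_{n}$ and $\gamma$ can be written down explicitly.

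First I would record the tube structure. Since $\gamma\in\mathcal{C}\cap C^{1,1}(\sphere_{1},\R^{d})$ is injective, it is a simple closed curve of class $C^{1,1}$ and hence has positive reach $\rho:=\Delta[\gamma]>0$ (see \cite{Federer1959a,Schuricht2003a,Lytchak2005a}). Consequently $(t,v)\mapsto\gamma(t)+v$ is a homeomorphism from $\{(t,v):v\perp\gamma'(t),\ \abs{v}<\rho\}$ onto the open tube $U_{\rho}:=\{x\in\R^{d}:\dist(x,\gamma(\sphere_{1}))<\rho\}$, the associated parameter projection $\Pi:U_{\rho}\to\sphere_{1}$, $\Pi(\gamma(t)+v)=t$, is continuous with $\Pi\circ\gamma=\mathrm{id}_{\sphere_{1}}$, and $\Pi^{-1}(t)$ is the normal disc $D_{t}:=\{\gamma(t)+v:v\perp\gamma'(t),\ \abs{v}<\rho\}$.

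The core of the argument is to show that $\Pi\circ p_{n}$ is a homeomorphism of $\sphere_{1}$ once $n$ is large. I would fix $\delta>0$ so small that $\measuredangle(\gamma'(s),\gamma'(t))<\tfrac{\pi}{3}$ whenever $s,t$ have arclength distance at most $2\delta$ (uniform continuity of $\gamma'$), and then, using uniform continuity of $\gamma^{-1}\colon\gamma(\sphere_{1})\to\sphere_{1}$ together with the tube structure, fix $\rho'\in(0,\rho]$ so small that $\abs{\gamma(s)-\gamma(t)}<2\rho'$ forces arclength distance less than $\delta$. For $n$ large we have $\norm{p_{n}-\gamma}_{C^{0}}<\rho'$ and $\norm{p_{n}'-\gamma'}_{L^{\infty}}<\tfrac14$, hence $p_{n}(\sphere_{1})\subset U_{\rho'}$. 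Now suppose $\Pi(p_{n}(s_{1}))=\Pi(p_{n}(s_{2}))=t_{0}$ with $0<\abs{s_{1}-s_{2}}_{\sphere_{1}}\le\delta$; then $p_{n}(s_{i})=\gamma(t_{0})+v_{i}$ with $v_{i}\perp\gamma'(t_{0})$ and $\abs{v_{i}}<\rho'$, so $\abs{\gamma(s_{i})-\gamma(t_{0})}<2\rho'$, whence $s_{1},s_{2}$ — and therefore the whole short sub-arc $A$ joining them — lie within arclength distance $2\delta$ of $t_{0}$. Thus $\langle\gamma'(s),\gamma'(t_{0})\rangle>\tfrac12$ for $s\in A$, so $\langle p_{n}'(s),\gamma'(t_{0})\rangle>\tfrac14$ for a.e. $s\in A$, and integrating over $A$ shows $\langle p_{n}(s_{1})-p_{n}(s_{2}),\gamma'(t_{0})\rangle\neq 0$, contradicting $v_{1}-v_{2}\perp\gamma'(t_{0})$. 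Hence $\Pi\circ p_{n}$ is injective on every arc of length $\le\delta$, so it is a local homeomorphism and therefore a covering of $\sphere_{1}$; since the segment from $\gamma(t)$ to $p_{n}(t)$ lies in $U_{\rho}$, the map $(u,t)\mapsto\Pi((1-u)\gamma(t)+up_{n}(t))$ is a homotopy from $\mathrm{id}_{\sphere_{1}}$ to $\Pi\circ p_{n}$, so this covering has degree one and is a homeomorphism.

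To conclude, I would set $\psi_{n}:=\Pi\circ p_{n}$ and $\tilde p_{n}:=p_{n}\circ\psi_{n}^{-1}$, which has the same image as $p_{n}$ and satisfies $\tilde p_{n}(t)=\gamma(t)+w_{n}(t)$ with $w_{n}(t)\perp\gamma'(t)$ and $\abs{w_{n}(t)}<\rho'$. The deformation $(u,t)\mapsto\gamma(t)+(1-u)w_{n}(t)$ stays inside $U_{\rho}$, consists of maps that are sections of $\Pi$ and hence embeddings, and joins $\tilde p_{n}$ to $\gamma$; pushing it through the trivialised tube and cutting off to the identity outside a slightly larger tube (isotopy extension) turns it into an ambient isotopy of $\R^{d}$ carrying $p_{n}(\sphere_{1})$ onto $\gamma(\sphere_{1})$. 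Since tameness makes the polygonal and the $C^{1,1}$ notions of knot type coincide and $p_{n}\in\mathcal{K}$, it follows that $\gamma\in\mathcal{K}$. I expect the middle step — turning the $C^{1}$-closeness into the statement that $p_{n}$ meets each normal disc exactly once — to be the main obstacle, since it is the only place where the embeddedness of $\gamma$ (via continuity of $\gamma^{-1}$) and the curvature bound on $\gamma$ (via uniform continuity of $\gamma'$) must be played off against each other; the tubular-neighbourhood construction and the isotopy-extension step are standard.
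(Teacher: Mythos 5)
Your proof is correct, and its overall skeleton --- realise $p_{n}$, for $n$ large, as a section of the normal disc bundle of the tube of radius $\mathrm{reach}(\gamma)=\Delta[\gamma]$, then retract it onto $\gamma$ along the straight segments inside the normal discs and extend this to an ambient isotopy --- is exactly the paper's. Where you genuinely diverge is in the central step, namely showing that $p_{n}$ meets each normal disc at most once. The paper works with the nearest-point projection $\xi_{\gamma}$: it first derives the quantitative bound \eqref{estimatepolygoncurvenpp} on $\abs{p_{n}'(s)-\gamma'(\gamma^{-1}(\xi_{\gamma}(p_{n}(s))))}$ via the global bi-Lipschitz Lemma \ref{globalbiLipschitz} and Federer's projection estimate, and then argues by contradiction and compactness: if $\xi_{\gamma}(p_{n}(s_{n}))=\xi_{\gamma}(p_{n}(t_{n}))$ for infinitely many $n$, a vertex $u_{n}\in[s_{n},t_{n}]$ extremising the distance to the normal hyperplane has a one-sided derivative making an angle $\geq\frac{\pi}{2}$ with $\gamma'(y_{n})$, which is incompatible with \eqref{estimatepolygoncurvenpp} once injectivity of $\gamma$ forces $s_{n},t_{n},u_{n}$ to collapse to a common parameter. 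You instead prove injectivity of $\Pi\circ p_{n}$ on every arc of length $\leq\delta$ by a direct integral estimate on $\langle p_{n}',\gamma'(t_{0})\rangle$, and then pass from local to global injectivity by a covering-space/degree argument, using the homotopy to the identity through the tube. Your route avoids both the bi-Lipschitz lemma and the vertex-selection trick and yields explicit thresholds for $n$, at the price of a small amount of covering space theory; the paper's route stays entirely at the level of pointwise estimates. In both arguments the hypotheses enter in the same way: injectivity of $\gamma$ through the uniform continuity of $\gamma^{-1}$ on the compact image, and the full $W^{1,\infty}$-convergence (rather than mere $C^{0}$-convergence) to rule out the polygon doubling back inside a single normal disc. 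The final isotopy-extension step is only sketched in your write-up, but the paper likewise delegates it to \cite{Gonzalez2002b,Strzelecki2013b}, so no gap arises there.
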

\begin{proof}
	\textbf{Step 1}
		For $||p-\gamma||_{W^{1,\infty}}\leq \frac{\Delta[\gamma]}{2}$ \cite[Lemma 4]{Gonzalez2003a} together with Lemma \ref{globalbiLipschitz} and \cite[4.8 Theorem (8)]{Federer1959a} allows us to estimate
		\begin{align}\label{estimateinversegammafederer}
			\abs{\gamma^{-1}(\xi_{\gamma}(\gamma(s)))-\gamma^{-1}(\xi_{\gamma}(p(s)))}\leq \tilde c^{-1}\abs{\xi_{\gamma}(\gamma(s))-\xi_{\gamma}(p(s))}
			\leq 2\tilde c^{-1}\abs{\gamma(s)-p(s)}.
		\end{align}
		Here, $\xi_{\gamma}$ is the nearest point projection onto $\gamma$. This means
		\begin{align}\label{estimatepolygoncurvenpp}
			\begin{split}
				\MoveEqLeft
				\abs{p'(s)-\gamma'(\gamma^{-1}(\xi_{\gamma}(p(s))))}
				\leq \abs{p'(s)-\gamma'(s)}+\abs{\gamma'(s)-\gamma'(\gamma^{-1}(\xi_{\gamma}(p(s))))}\\
				&\leq ||p'-\gamma'||_{L^{\infty}}+\Delta[\gamma]^{-1}\abs{s-\gamma^{-1}(\xi_{\gamma}(p(s)))}\\
				&=||p'-\gamma'||_{L^{\infty}}+\Delta[\gamma]^{-1}\abs{\gamma^{-1}(\xi_{\gamma}(\gamma(s)))-\gamma^{-1}(\xi_{\gamma}(p(s)))}\\
				&\leq ||p'-\gamma'||_{L^{\infty}}+\Delta[\gamma]^{-1}2\tilde c^{-1}\abs{\gamma(s)-p(s)}
				\leq C||p-\gamma||_{W^{1,\infty}}.
			\end{split}
		\end{align}
		Note that although we have a fixed parameter $s$ we still can estimate $\abs{p'(s)-\gamma'(s)}\leq ||p'-\gamma'||_{L^{\infty}}$ since $p'-\gamma'$ is piecewise continuous.
		If $p(s)$ is a vertex the estimate still holds if we identify $p'(s)$ with either the left or right derivative.\\
	\textbf{Step 2}
		Let $s_{n},t_{n}\in I$, $s_{n}<t_{n}$ with $\xi_{\gamma}(p_{n}(s_{n}))=\xi_{\gamma}(p_{n}(t_{n}))$. We want to show that this situation can only happen for a finite number of $n$.
		Assume that this is not true.
		Let $u_{n}\in [s_{n},t_{n}]$ such that $p_{n}(u_{n})$ is a vertex and maximizes the distance to $\gamma(y_{n})+\gamma'(y_{n})^{\perp}$ for $y_{n}=\gamma^{-1}(\xi_{\gamma}(p(s_{n})))$. 
		For the right derivative $p'^{+}(u_{n})$ holds $d(p'^{+}(u_{n}),\gamma'(y_{n}))\geq \frac{\pi}{2}$.
		\begin{figure}
			\centering 
			\includegraphics[width=0.7\textwidth]{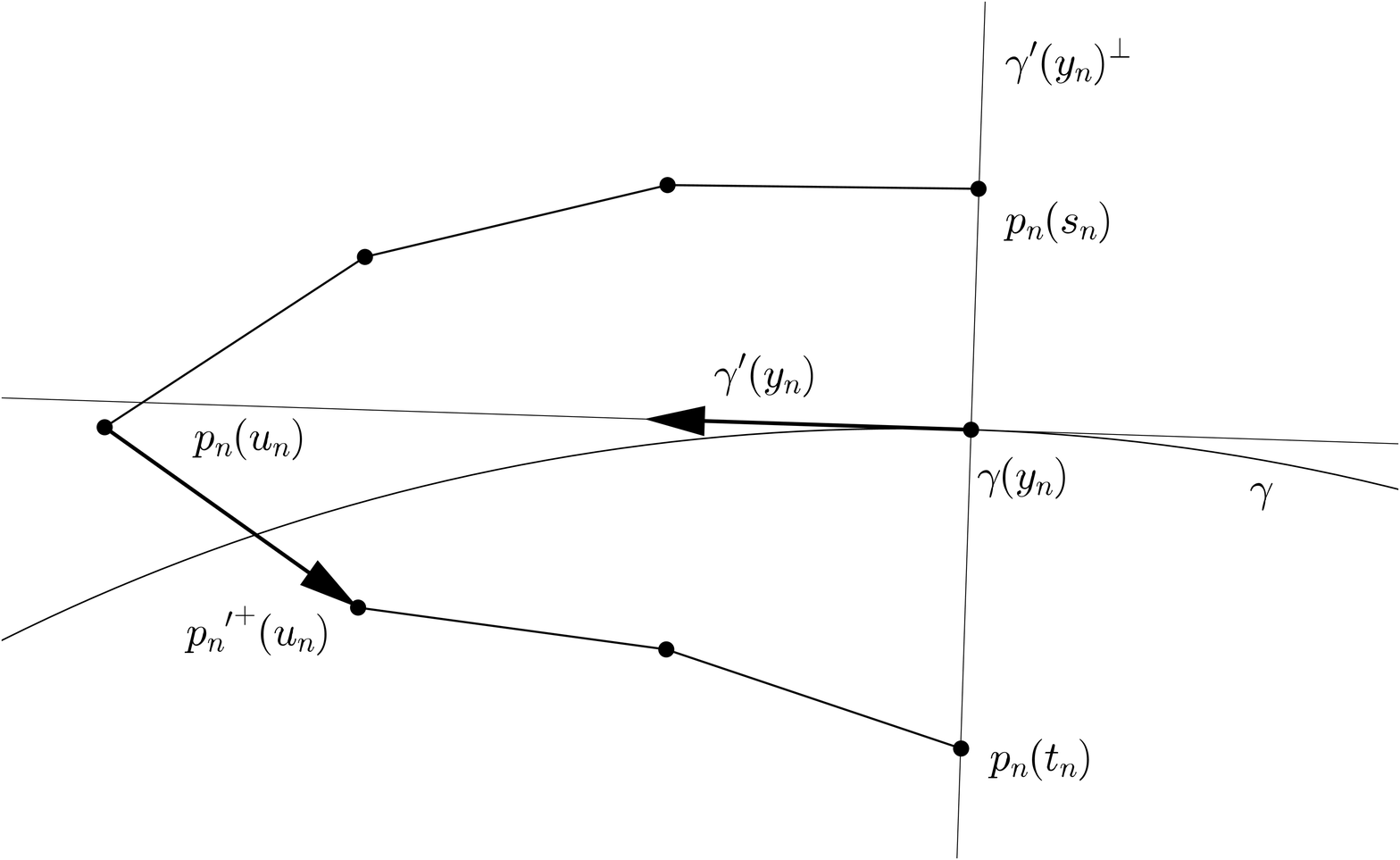}
			\caption{The situation in the proof of Proposition \ref{polygonalconvergencedoesnotchangeknotclass}.}
			\label{picturedistancenormalspace}
		\end{figure}
		As in \eqref{estimateinversegammafederer} we have $\abs{p_{n}(s_{n})-p_{n}(t_{n})}\leq 4\tilde c^{-1}||p_{n}-\gamma||_{W^{1,\infty}}$ 
		and hence for some subsequence $s_{n}\to s_{0}$, $t_{n}\to t_{0}$ and $p_{n}(s_{n})\to \gamma(s_{0})$, $p_{n}(t_{n})\to\gamma(t_{0})$ so that $s_{0}=t_{0}$, since $\gamma$ is injective.
		Therefore also $p_{n}(u_{n})\to \gamma(t_{0})$. But on the other hand \eqref{estimatepolygoncurvenpp} for $s=u_{n}$, $\gamma^{-1}(\xi_{\gamma}(p_{n}(u_{n})))=z_{n}$ 
		and $d$ the distance on the sphere gives a contradiction via
		\begin{align*}
			\MoveEqLeft
			\frac{\pi}{2}-\frac{\pi}{2}C||p_{n}-\gamma||_{W^{1,\infty}}\stackrel{\text{\eqref{estimatepolygoncurvenpp}}}{\leq} d(p'^{+}(u_{n}),\gamma'(y_{n}))-d(p'^{+}(u_{n}),\gamma'(z_{n}))\\
			&\leq d(\gamma'(y_{n}),\gamma'(z_{n}))\leq \frac{\pi}{2}\Delta[\gamma]^{-1}\abs{y_{n}-z_{n}}
			\stackrel{\text{\eqref{estimateinversegammafederer}}}{\leq} \frac{\pi}{2}\Delta[\gamma]^{-1} 2\tilde c^{-1}\abs{p_{n}(s_{n})-p_{n}(u_{n})}\rightarrow 0.
		\end{align*}
	\textbf{Step 3}
		Now we are in a situation similar to \cite[Proof of Lemma 5]{Gonzalez2002b}, \cite[Theorem 4.10]{Strzelecki2013b} and as there we can construct an ambient isotopy by 
		moving the point $p_{n}(s)$ to $\gamma(\gamma^{-1}(\xi_{\gamma}(p_{n}(s))))$ 
		along a straight line segment in the circular cross section of the tubular neighbourhood about $\gamma$.
\end{proof}

\begin{lemma}[Injective locally bi-L. mappings on compact sets are globally bi-L.]\label{globalbiLipschitz}
	Let $(K,d_{1})$, $(X,d_{2})$ be non-empty metric spaces, $K$ compact and $f:K\to X$ be an injective mapping that is locally bi-Lipschitz, i.e.,
	there are constants $c,C>0$ such that for every $x\in K$ there is a neighbourhood $U_{x}$ of $x$ with
	\begin{align*}
		c\, d_{1}(x,y)\leq d_{2}(f(x),f(y))\leq C d_{1}(x,y)\quad\text{for all }y\in U_{x}.
	\end{align*}
	Then there are constants $\tilde c,\tilde C>0$ with
	\begin{align}\label{globalbiLipschitzeq}
		\tilde c\, d_{1}(x,y)\leq d_{2}(f(x),f(y))\leq \tilde C d_{1}(x,y) \quad\text{for all }x,y\in K.
	\end{align}
\end{lemma}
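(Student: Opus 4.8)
The plan is to prove the two inequalities in \eqref{globalbiLipschitzeq} essentially independently. The upper bound, $d_{2}(f(x),f(y))\leq \tilde C d_{1}(x,y)$, is the easy half: I would argue that local $C$-Lipschitzness plus a chaining argument along a chain of overlapping neighbourhoods gives a global Lipschitz bound. More precisely, by compactness the cover $\{U_{x}\}_{x\in K}$ has a Lebesgue number $\delta>0$, so any two points at distance $<\delta$ lie in a common $U_x$ and hence satisfy the $C$-estimate directly. For points at distance $\geq \delta$, the trivial bound $d_2(f(x),f(y))\leq \diam f(K) \leq \tilde C d_1(x,y)$ with $\tilde C = \diam f(K)/\delta$ finishes it (note $\diam f(K)<\infty$ since $f$ is continuous — it is locally Lipschitz — and $K$ is compact). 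Taking the larger of $C$ and this constant gives the global upper Lipschitz constant.

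The lower bound is the main obstacle and I would prove it by contradiction. Suppose no $\tilde c>0$ works; then there are sequences $x_n,y_n\in K$ with $d_2(f(x_n),f(y_n)) < \frac1n\, d_1(x_n,y_n)$, and in particular $d_1(x_n,y_n)$ is bounded away from $0$ — say $d_1(x_n,y_n)\geq \rho > 0$ for all $n$ — because for $x_n,y_n$ in a common $U_{x_n}$ the local lower bound $c\,d_1 \leq d_2(f(x_n),f(y_n))$ would already contradict the assumed inequality once $\frac1n < c$, so for large $n$ the points cannot lie in any common $U_x$, and the Lebesgue number $\delta$ then forces $d_1(x_n,y_n)\geq\delta$. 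By compactness of $K$, pass to a subsequence with $x_n\to x_\infty$, $y_n\to y_\infty$; then $d_1(x_\infty,y_\infty)\geq \delta > 0$, so $x_\infty\neq y_\infty$. On the other hand $f$ is continuous (being locally Lipschitz), so $f(x_n)\to f(x_\infty)$ and $f(y_n)\to f(y_\infty)$, while $d_2(f(x_n),f(y_n)) \leq \frac1n \diam K \to 0$; hence $f(x_\infty)=f(y_\infty)$. This contradicts injectivity of $f$.

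Combining, set $\tilde c$ equal to the constant produced by the contradiction argument (any value below the threshold works once we know a positive one exists — formally, $\tilde c := \inf_{x\neq y} d_2(f(x),f(y))/d_1(x,y)$, which the argument above shows is strictly positive) and $\tilde C := \max\{C,\ \diam f(K)/\delta\}$; these satisfy \eqref{globalbiLipschitzeq} for all $x,y\in K$, which completes the proof. The one subtlety worth spelling out carefully in the write-up is the extraction of the uniform lower bound $d_1(x_n,y_n)\geq \delta$ from the Lebesgue-number property — this is exactly where compactness is used in an essential, non-removable way, and it is also where the hypothesis that the local bi-Lipschitz constants $c,C$ are uniform over $x\in K$ (rather than depending on $x$) enters.
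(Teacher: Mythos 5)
Your proposal is correct and follows essentially the same route as the paper: both proofs use a Lebesgue number $\delta$ for the cover $(U_x)_{x\in K}$ to split pairs into near ones (distance $<\delta$), handled by the local hypothesis, and far ones (the compact set $K_\delta=\{(x,y): d_1(x,y)\geq\delta\}$, disjoint from the diagonal), handled by compactness together with continuity and injectivity of $f$; the only difference is that the paper obtains the bounds on far pairs by directly taking the minimum $\epsilon$ and maximum $M$ of the continuous function $(x,y)\mapsto d_2(f(x),f(y))$ on $K_\delta$, which yields explicit constants $\tilde c=\min\{c,\epsilon/\diam(K)\}$ and $\tilde C=\max\{C,M/\delta\}$, whereas you reach the lower bound by a sequential contradiction argument of identical content. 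The subtlety you flag at the end is real but is shared with (and glossed over by) the paper: the Lebesgue number only guarantees that a pair at distance $<\delta$ lies in \emph{some} $U_z$, not in $U_x$ itself, so one must read ``locally bi-Lipschitz'' as ``$f$ restricted to $U_x$ satisfies the two-sided estimate for arbitrary pairs of its points'' (as the application to the curve $\gamma$ indeed permits) for either proof to close.
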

\begin{proof}
	By Lebesgue's Covering Lemma we obtain a $\mathrm{diam}(K)>\delta>0$ such that $(B_{\delta}(x))_{x\in K}$ is a refinement of $(U_{x})_{x\in K}$. 
	Then $K_{\delta}\vcentcolon=\{(x,y)\in K^{2}\mid d_{1}(x,y)\geq \delta\}$
	is compact and non-empty. Hence
	\begin{align*}
		0<\epsilon\vcentcolon= \min_{(x,y)\in K_{\delta}}d_{2}(f(x),f(y))\leq \max_{(x,y)\in K_{\delta}}d_{2}(f(x),f(y))=\vcentcolon M<\infty,
	\end{align*}
	since $\mathrm{diag}(K)\cap K_{\delta}=\emptyset$ and $f$ is continuous and injective. Thus
	\begin{align*}
		d_{2}(f(x),f(y))\leq M = C'\delta\leq C'd_{1}(x,y)\quad\text{for all }x,y\in K_{\delta}
	\end{align*}
	holds for $C'\vcentcolon=M\delta^{-1}$ and
	\begin{align*}
		c'd_{1}(x,y)\leq c'\mathrm{diam}(K)=\epsilon\leq d_{2}(f(x),f(y))\quad\text{for all }x,y\in K_{\delta}
	\end{align*}
	for $c'\vcentcolon=\epsilon \mathrm{diam}(K)^{-1}$. Choosing $\tilde c\vcentcolon=\min\{c,c'\}$ and $\tilde C\vcentcolon=\max\{C,C'\}$ yields \eqref{globalbiLipschitzeq},
	because $(x,y)\not\in K_{\delta}$ implies $y\in B_{\delta}(x)\subset U_{x}$.
\end{proof}

\section{The $\liminf$ inequality }\label{sectionliminf}

Using Schur's Theorem for curves of finite total curvature, see for example \cite[Theorem 5.1]{Sullivan2008a}, we can prove Rawdon's result \cite[Lemma 2.9.7, p.58]{Rawdon1997a} for embedded $C^{1,1}$ curves.
Note, that especially the estimate from \cite[Proof of Theorem 2]{Litherland1999a} that is implicitly used in the proof of \cite[Lemma 2.9.7, p.58]{Rawdon1997a} holds for $C^{1,1}$ curves.

\begin{lemma}[Approximation of curves with $\frac{\dcsd(\gamma)}{2}<\minRad(\gamma)$]\label{approximationcurveswithsmalldcsc}
	Let $\gamma\in \mathcal{C}(\mathcal{K})\cap C^{1,1}(\sphere_{1},\R^{d})$ and $p\in\mathcal{P}_{n}$ for some $n$ such that
	\begin{align*}
		\minRad(\gamma)-\frac{\dcsd(\gamma)}{2}=\delta>0\quad\text{and}\quad
		||\gamma-p||_{L^{\infty}}< \epsilon
	\end{align*}
	for $\epsilon<\delta/4$. Then
	\begin{align*}
		\dcsd(p)\leq \dcsd(\gamma)+2\epsilon.
	\end{align*}
\end{lemma}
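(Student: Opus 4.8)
The plan is to show that the doubly critical self-distance of the polygon $p$ cannot be much larger than that of $\gamma$ by exhibiting an explicit doubly critical pair on $p$ of length at most $\dcsd(\gamma) + 2\epsilon$. The natural candidate is the pair that realizes (or nearly realizes) $\dcsd(\gamma)$: let $(x,y) = (\gamma(t), \gamma(s))$ be a doubly critical pair of $\gamma$ with $\abs{x-y} = \dcsd(\gamma)$. The corresponding points $p(t), p(s)$ on the polygon are within $\epsilon$ of $x$ and $y$ respectively, so $\abs{p(t)-p(s)} \leq \dcsd(\gamma) + 2\epsilon$. The issue is that $(p(t),p(s))$ need not itself be doubly critical on $p$, so I must locate a genuine doubly critical pair of $p$ nearby (or of comparably small length).

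First I would clarify what doubly criticality means for a polygon in this setting: by the definition recalled in the excerpt, $(p(v),p(w)) \in \dcrit(p)$ when $w$ locally extremizes $u \mapsto \abs{p(v)-p(u)}^2$ and $v$ locally extremizes $u \mapsto \abs{p(u)-p(w)}^2$. Local extrema of the squared-distance-to-a-point function along a polygon occur either at vertices or at interior points of edges where the edge is perpendicular to the chord to the fixed point; in particular, on any arc of the polygon the global minimum of the distance to a fixed external point is attained and is automatically a local extremum. The strategy is: starting from the near-minimal chord $p(t)p(s)$, perform an alternating minimization (a "ping-pong" argument): fix $p(s)$, move the other endpoint to a point $p(t_1)$ minimizing distance to $p(s)$ over a suitable sub-arc around $t$; then fix $p(t_1)$ and minimize over a sub-arc around $s$; iterate. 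Each step does not increase the chord length, so the resulting pair still has length $\leq \dcsd(\gamma)+2\epsilon$, and a fixed point of this process (which exists because the distances are nonincreasing and bounded below, or by a compactness argument over the finite combinatorial structure of which edges/vertices are active) is doubly critical.

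The delicate point — and the main obstacle — is controlling the \emph{arcs} over which one minimizes, so that the ping-pong cannot escape to a far-apart pair (which could have larger chord length, or be the "wrong" critical pair) and so that the minima produced really are \emph{local} extrema of the full squared-distance functions, not boundary artifacts of an artificially truncated arc. This is exactly where the hypothesis $\minRad(\gamma) - \tfrac{\dcsd(\gamma)}{2} = \delta > 0$ with $\epsilon < \delta/4$ enters: it guarantees $\dcsd(\gamma) < 2\minRad(\gamma)$, so the realizing pair $(x,y)$ is a genuine "close approach" rather than a curvature-type contact, and the points $x,y$ are a definite distance ($\geq \dcsd(\gamma)$, while nearby arc-length neighborhoods of $t$ and $s$ stay much closer to $x$ resp. $y$ by the $C^{1,1}$/curvature bound) apart along the curve. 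Concretely one fixes small arc-length balls $B_\rho(t)$, $B_\rho(s)$ on $\sphere_1$ with $\rho$ chosen in terms of $\delta$ so that on $p(B_\rho(t))$ the distance to any point of $p(B_\rho(s))$ is minimized at an interior point of the ball (never at its endpoints), using that $\gamma$ turns by a controlled amount on such an arc and $p$ is $\epsilon$-close; then the ping-pong stays inside these two balls and terminates.

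Finally I would assemble the estimate: letting $(p(t_\infty), p(s_\infty)) \in \dcrit(p)$ be the pair produced, monotonicity of the ping-pong gives
\begin{align*}
	\dcsd(p) \leq \abs{p(t_\infty)-p(s_\infty)} \leq \abs{p(t)-p(s)} \leq \abs{\gamma(t)-\gamma(s)} + 2\norm{\gamma-p}_{L^\infty} < \dcsd(\gamma) + 2\epsilon,
\end{align*}
which is the claimed bound. I would remark that this is essentially Rawdon's argument \cite[Lemma 2.9.7, p.58]{Rawdon1997a} transported from polygonal to $C^{1,1}$ comparison curves, with Schur's Theorem for finite-total-curvature curves (\cite[Theorem 5.1]{Sullivan2008a}) supplying the needed control on how far $\gamma$ wanders on short arcs, as flagged in the paragraph preceding the lemma; the role of the curvature gap $\delta$ is purely to keep the two "contact arcs" disjoint and well-separated so the localization is legitimate.
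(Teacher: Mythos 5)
Your overall skeleton (transport a short chord from $\gamma$ to $p$, then produce a genuine doubly critical pair of $p$ that is no longer than it) matches the paper's, but the step where you actually produce the doubly critical pair has a genuine gap. You localize the alternating minimization to small arc-length balls $B_{\rho}(t)$, $B_{\rho}(s)$ around the pair realizing $\dcsd(\gamma)$ and claim that for $\rho$ small the distance to any point of $p(B_{\rho}(s))$ is minimized at an \emph{interior} point of $B_{\rho}(t)$. This is false in general. First, the realizing pair of $\dcsd(\gamma)$ is only a critical point of the chord length, and under the standing hypothesis one only knows $\dcsd(\gamma)<2\minRad(\gamma)$; the second derivative of $u\mapsto\abs{\gamma(u)-y}^{2}$ at $u=t$ is $2(1+\langle\gamma''(t),\gamma(t)-y\rangle)\geq 2(1-\maxCurv(\gamma)\,\dcsd(\gamma))$, which can be negative (already for a point $y$ on the concave side of a circular arc at distance between $\minRad$ and $2\minRad$ the critical point is a local \emph{maximum}). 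So minimizing over a small ball can push the point to the boundary, and the ping-pong escapes. Second, the lemma only assumes $\norm{\gamma-p}_{L^{\infty}}<\epsilon$, so no derivative information about $p$ is available; to force an interior minimum by a purely $C^{0}$ comparison you would need the boundary values of the chord length on $\gamma$ to exceed the interior value by more than $4\epsilon$, which small balls cannot provide uniformly. Your appeal to ``$\gamma$ turns by a controlled amount'' does not supply either of these.

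The paper's proof closes exactly this hole with a different, global localization. Setting $d=\tfrac12\bigl(\minRad(\gamma)+\tfrac{\dcsd(\gamma)}{2}\bigr)$, one minimizes $\abs{p(s)-p(t)}$ over all pairs whose \emph{arc-length} distance is at least $\pi d$. Rawdon's Lemma 2.9.7 provides both halves: some admissible pair has chord length $<\dcsd(\gamma)+2\epsilon$, while on the boundary of the constraint set (arc-distance exactly $\pi d$) Schur's comparison forces the chord length to be at least $2d-2\epsilon=\dcsd(\gamma)+\delta-2\epsilon>\dcsd(\gamma)+2\epsilon$ precisely because $\epsilon<\delta/4$. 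Hence the global minimizer lies in the open set, is a local minimum in each variable separately, and is therefore doubly critical; no iteration is needed. Note that the role of $\delta$ is not to ``keep the two contact arcs disjoint'' but to create the quantitative gap between the boundary chord lengths and $\dcsd(\gamma)+2\epsilon$; without replacing your small-ball localization by this arc-distance constraint (at which point you have rederived the paper's argument), the proof does not go through.
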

\begin{proof}
	Let $\minRad(\gamma)-\frac{\dcsd(\gamma)}{2}=\delta>0$, $\epsilon<\delta/4$ and set $d\vcentcolon=\frac{1}{2}(\minRad(\gamma)+\frac{\dcsd(\gamma)}{2})$.
	By \cite[Lemma 2.9.7 2., p.58]{Rawdon1997a} there are $(s_{0},t_{0})\in \overline A_{\pi d}^{\gamma}\vcentcolon=\{ (s,t)\mid d(s,t)\geq \pi d\}$, see notation in \cite{Rawdon1997a},
	such that
	\begin{align*}
		\abs{p(s_{0})-p(t_{0})}<\dcsd(\gamma)+2\epsilon.
	\end{align*}
	Now, let $(\overline s,\overline t)\in \overline A_{\pi d}^{\gamma}$ such that 
	\begin{align}\label{estimateoverlinestsmall}
		\abs{p(\overline s)-p(\overline t)}=\min_{(s,t)\in \overline A_{\pi d}^{\gamma}}\abs{p(s)-p(t)}\leq \abs{p(s_{0})-p(t_{0})}<\dcsd(\gamma)+2\epsilon.
	\end{align}
	Then either $(\overline s,\overline t)$ lie in the open set $A_{\pi d}^{\gamma}\vcentcolon=\{(s,t)\mid d(s, t)>\pi d\}$ or by \cite[Lemma 2.9.7 1., p.58]{Rawdon1997a} holds
	\begin{align*}
		\abs{p(\overline t)-p(\overline s)}\geq \minRad(\gamma)+\frac{\dcsd(\gamma)}{2}-2\epsilon
		= \dcsd(\gamma)+\delta-2\epsilon>\dcsd(\gamma)+2\epsilon,
	\end{align*}
	which contradicts \eqref{estimateoverlinestsmall}. Hence $(\overline s,\overline t)$ lie in the open set $A_{\pi d}^{\gamma}$. This means we can use the argument from
	\cite[Lemma 2.9.8, p.60]{Rawdon1997a} to show that $p(\overline s)$ and $p(\overline t)$ are doubly critical for $p$ and therefore
	\begin{align*}
		\dcsd(p)\leq \abs{p(\overline s)-p(\overline t)}\leq \dcsd(\gamma)+2\epsilon.
	\end{align*}
\end{proof}

\begin{proposition}[The $\liminf$ inequality]\label{liminfinequality}
	Let $\gamma\in \mathcal{C}(\mathcal{K})$, $p_{n}\in \mathcal{P}_{n}(\mathcal{K})$ with $p_{n}\to \gamma$ in $W^{1,\infty}$ for $n\to\infty$. Then
	\begin{align*}
		\Delta[\gamma]^{-1}\leq \liminf_{n\to\infty}\Delta_{n}[p_{n}]^{-1}.
	\end{align*}
\end{proposition}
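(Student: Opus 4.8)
The plan is to use the formula $\Delta_n[p_n]=\min\{\minRad(p_n),2^{-1}\dcsd(p_n)\}$ and $\Delta[\gamma]=\min\{\minRad(\gamma),2^{-1}\dcsd(\gamma)\}$ and argue separately according to which term realises the smooth thickness. If $\liminf_{n\to\infty}\Delta_n[p_n]^{-1}=\infty$ there is nothing to prove, so assume the $\liminf$ is finite and pass to a subsequence (not relabelled) along which $\Delta_n[p_n]^{-1}$ converges to this $\liminf$; in particular $\maxCurv(p_n)\leq\Delta_n[p_n]^{-1}$ stays bounded, so Proposition \ref{compactness} tells us $\gamma\in C^{1,1}$ and $\maxCurv(\gamma)\leq\liminf_{n\to\infty}\maxCurv(p_n)$, i.e. $\minRad(\gamma)^{-1}\leq\liminf_{n\to\infty}\minRad(p_n)^{-1}$. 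So the curvature side of the inequality is already handled by the compactness proposition; what remains is to control the doubly critical self-distance.

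For the $\dcsd$ side I would split into two cases. \textbf{Case 1:} $2^{-1}\dcsd(\gamma)\geq\minRad(\gamma)$, i.e. $\Delta[\gamma]=\minRad(\gamma)$. Then $\Delta[\gamma]^{-1}=\minRad(\gamma)^{-1}\leq\liminf_n\minRad(p_n)^{-1}\leq\liminf_n\Delta_n[p_n]^{-1}$ and we are done. \textbf{Case 2:} $\minRad(\gamma)-2^{-1}\dcsd(\gamma)=\delta>0$, i.e. the doubly critical self-distance strictly realises the smooth thickness. Here I invoke Lemma \ref{approximationcurveswithsmalldcsc}: since $p_n\to\gamma$ in $W^{1,\infty}$, in particular $\|\gamma-p_n\|_{L^\infty}<\epsilon_n\to0$, so for $n$ large enough $\epsilon_n<\delta/4$ and the lemma gives $\dcsd(p_n)\leq\dcsd(\gamma)+2\epsilon_n$. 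Therefore $2^{-1}\dcsd(p_n)\leq 2^{-1}\dcsd(\gamma)+\epsilon_n$, hence
\begin{align*}
	\Delta_n[p_n]^{-1}=\max\{\minRad(p_n)^{-1},2(\dcsd(p_n))^{-1}\}\geq \frac{2}{\dcsd(\gamma)+2\epsilon_n}\xrightarrow[n\to\infty]{}\frac{2}{\dcsd(\gamma)}=\Delta[\gamma]^{-1},
\end{align*}
so $\Delta[\gamma]^{-1}\leq\liminf_{n\to\infty}\Delta_n[p_n]^{-1}$ as claimed.

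Finally I would check the bookkeeping: the reduction to a subsequence realising the $\liminf$ is legitimate because we only need a lower bound on the $\liminf$, and the conclusion for the full sequence follows since any subsequence has a further subsequence of the above type; also Lemma \ref{approximationcurveswithsmalldcsc} is stated for $\gamma\in\mathcal{C}(\mathcal{K})\cap C^{1,1}$, and the $C^{1,1}$ regularity of $\gamma$ is exactly what Proposition \ref{compactness} supplies under our finiteness assumption. The main obstacle is really packaged inside Lemma \ref{approximationcurveswithsmalldcsc}: one must know that when $\dcsd$ strictly dominates, a doubly critical pair for $\gamma$ can be tracked to a nearby (genuinely doubly critical) pair for the polygon $p_n$ with only an $O(\epsilon)$ loss, and that the other, curvature-governed regime cannot produce a smaller polygonal self-distance — this is the content of Rawdon's estimates cited there. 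Given that lemma, the proof above is a short case distinction built on the compactness proposition.
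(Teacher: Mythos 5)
Your proposal is correct and follows essentially the same route as the paper: reduce to $\gamma\in C^{1,1}$ via the compactness results, handle the case $\Delta[\gamma]^{-1}=\maxCurv(\gamma)$ with Proposition \ref{compactness}, and handle the case where $\dcsd$ strictly realises the thickness with Lemma \ref{approximationcurveswithsmalldcsc}. Your subsequence bookkeeping is slightly more explicit than the paper's (which simply invokes Proposition \ref{compactnessdcsd} for the $C^{1,1}$ regularity), but the substance is identical.
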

\begin{proof}
	By Proposition \ref{compactnessdcsd} we might assume without loss of generality that $\gamma\in C^{1,1}(\sphere_{1},\R^{d})$.
	In case $\Delta[\gamma]^{-1}=\maxCurv(\gamma)$ the proposition follows from Proposition \ref{compactness} and in case $\Delta[\gamma]^{-1}=\frac{2}{\dcsd(\gamma)}>\maxCurv(\gamma)$
	Lemma \ref{approximationcurveswithsmalldcsc} gives $\limsup_{n\to\infty}\dcsd(p_{n})\leq \dcsd(\gamma)$, so that 
	\begin{align*}
		\Delta[\gamma]^{-1}=\frac{2}{\dcsd(\gamma)}\leq \liminf_{n\to\infty}\frac{2}{\dcsd(p_{n})}\leq \liminf_{n\to\infty}\Delta_{n}[p_{n}]^{-1}.
	\end{align*}
\end{proof}

Clearly, the previous proposition also holds for subsequences $p_{n_{k}}$.

\section{The $\limsup$ inequality }\label{sectionlimsup}

\begin{proposition}[The $\limsup$ inequality]\label{limsupinequality}
	For every $\gamma\in \mathcal{C}(\mathcal{K})\cap C^{1,1}(\sphere_{1},\R^{d})$ there are $p_{n}\in \mathcal{P}_{n}(\mathcal{K})$ with $p_{n}\to \gamma$ in $W^{1,\infty}$ and
	\begin{align*}
		\limsup_{n\to\infty}\Delta_{n}[p_{n}]^{-1}\leq \Delta[\gamma]^{-1}.
	\end{align*}
\end{proposition}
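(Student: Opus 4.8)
The plan is to construct the approximating polygons $p_n$ explicitly by inscribing them in $\gamma$: pick parameters $0 = s_0^n < s_1^n < \dots < s_n^n = 1$ dividing $\sphere_1$ into arcs on which $\gamma$ has equal \emph{chord length} (not equal arc length), so that the inscribed polygon with vertices $\gamma(s_i^n)$ is automatically equilateral; after rescaling by a factor $\ell_n \to 1$ we obtain $p_n \in \mathcal{P}_n$. Since $\gamma \in C^{1,1}$, the chord length between $\gamma(s)$ and $\gamma(t)$ is comparable to $|s-t|$, so the mesh size goes to zero uniformly, and a standard estimate gives $\|p_n - \gamma\|_{C^0} = O(1/n^2)$ while $\|p_n' - \gamma'\|_{L^\infty} = O(1/n)$; hence $p_n \to \gamma$ in $W^{1,\infty}$. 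For $n$ large $p_n$ is embedded and, being a uniform $W^{1,\infty}$-approximation of the embedded curve $\gamma$, lies in the same knot class $\mathcal{K}$ (this is exactly Proposition \ref{polygonalconvergencedoesnotchangeknotclass}), so $p_n \in \mathcal{P}_n(\mathcal{K})$.

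The estimate $\limsup_n \Delta_n[p_n]^{-1} \le \Delta[\gamma]^{-1}$ then splits according to the two terms in $\Delta_n[p_n] = \min\{\minRad(p_n), \tfrac12\dcsd(p_n)\}$, and I want to bound each from below in the limit. For the curvature term: at a vertex $p_n(s_i^n)$ the exterior angle $\phi_i^n$ between consecutive edges is controlled by the total curvature of $\gamma$ over the two adjacent arcs, which is at most $\maxCurv(\gamma)$ times the combined arc length $\approx (|I_i| + |I_{i+1}|)$; using $\kappa_{d,2} \le \kappa_d$ is not quite what is needed here, rather one checks directly that $\kappa_d(x_{i-1},x_i,x_{i+1}) = \frac{2\tan(\phi_i^n/2)}{(|x-y|+|z-y|)/2} \to$ a quantity $\le \maxCurv(\gamma)$ as $n\to\infty$, because $\tan(\phi/2) = \phi/2 + o(\phi)$ and the arc-length-to-chord-length ratio tends to $1$. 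This yields $\liminf_n \minRad(p_n) \ge \minRad(\gamma)$. For the self-distance term: since $p_n \to \gamma$ uniformly and $\gamma$ is embedded with positive thickness, any pair of points on $p_n$ realizing a short doubly critical distance must have nearby parameters converging to a doubly critical pair of $\gamma$ (the criticality conditions pass to the limit because $p_n' \to \gamma'$ uniformly); hence $\liminf_n \dcsd(p_n) \ge \dcsd(\gamma)$. Combining, $\liminf_n \Delta_n[p_n] \ge \Delta[\gamma]$, which is the claim.

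The main obstacle is the curvature term, specifically getting the right \emph{direction} of the inequality for $\minRad$. It is easy to bound the exterior angle $\phi_i^n$ above by the total curvature $\int |\gamma''|$ over the relevant arc, but one must then verify that dividing by the chord-based denominator $(|x-y|+|z-y|)/2$ — which is \emph{smaller} than the arc-length denominator — does not destroy the bound; the saving grace is that the denominator ratio is $1 + O(1/n^2)$ while $\tan(\phi/2)/(\phi/2) = 1 + O(\phi^2) = 1 + O(1/n^2)$ as well, so the discrete curvature at the vertex exceeds the smooth maximal curvature only by a factor $1 + O(1/n^2)$, which is harmless in the limit. One subtlety is that $\maxCurv(p_n)$ is a maximum over only $n$ vertices while $\maxCurv(\gamma)$ is an essential supremum; choosing the partition so that vertices are not systematically placed at low-curvature points — e.g. a mild refinement argument or simply appealing to continuity of $\gamma''$ on a dense set and approximating the worst arc — handles this. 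A second, more technical point is the rescaling: the inscribed polygon has length $\ell_n = \sum_i |I_i^n| < 1$, so $p_n = \ell_n^{-1}\,(\text{inscribed polygon reparametrized by arc length})$; one checks $\ell_n = 1 - O(1/n^2)$ so all the above estimates survive the rescaling, and $p_n$ stays in the knot class since rescaling is an ambient isotopy.
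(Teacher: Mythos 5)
Your overall strategy (an inscribed equilateral polygon, then separate estimates for the curvature term and for the doubly critical self distance) is the same as the paper's, and your treatment of the curvature term is a valid variant: you bound the exterior angle by the total curvature of the two adjacent arcs and compare chord length to arc length, obtaining $\limsup_{n}\maxCurv(p_{n})\leq\maxCurv(\gamma)$, whereas the paper instead bounds the circumradius $r(x_{i-1},x_{i},x_{i+1})$ of three points lying \emph{on} $\gamma$ from below by $\Delta[\gamma]$ and concludes only $\limsup_{n}\maxCurv(p_{n})\leq\Delta[\gamma]^{-1}$; both suffice. (Your ``subtlety'' about vertices possibly missing high-curvature points is aimed in the wrong direction: for the $\limsup$ inequality an undershooting $\maxCurv(p_{n})$ only helps, so no refinement of the partition is needed.)

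The genuine gap is in the $\dcsd$ term. You argue that a pair realizing $\dcsd(p_{n})$ has parameters $(s_{n},t_{n})$ converging along a subsequence to a doubly critical pair of $\gamma$ because ``the criticality conditions pass to the limit.'' But a doubly critical pair consists of \emph{distinct} points, and nothing in your argument excludes $s_{n},t_{n}\to s$ with a common limit; in that case $\abs{p_{n}(s_{n})-p_{n}(t_{n})}\to 0$, hence $\dcsd(p_{n})\to 0$ and $2/\dcsd(p_{n})\to\infty$, and the $\limsup$ inequality fails. The limiting relation $\langle\gamma'(s),\gamma(t)-\gamma(s)\rangle=0$ is vacuous when $\gamma(t)=\gamma(s)$, and embeddedness of $\gamma$ by itself does not forbid doubly critical pairs of the polygon at small parameter distance. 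Ruling out this degeneration is exactly what the paper's Step 2 does: by Rawdon's lemma the total curvature of a polygon between two doubly critical points is at least $\pi$, which combined with the curvature bound from the first step forces $\abs{t_{n}-s_{n}}$ to be bounded below by a positive constant of order $\pi\Delta[\gamma]$, so the limits are distinct and Lemma \ref{limitsofdoublycriticalpoints} applies. Without an argument of this kind your claim $\liminf_{n}\dcsd(p_{n})\geq\dcsd(\gamma)$ is unsupported. A secondary point: the existence of a \emph{closed} inscribed equilateral polygon with exactly $n$ vertices is not automatic from an ``equal chord length'' subdivision, since the last chord must close up with the same length; the paper outsources this to \cite[Proposition 10]{Scholtes2013d}, and you should either cite such a result or supply the intermediate-value argument.
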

\begin{proof}
	In \cite[Proposition 10]{Scholtes2013d} we showed that if $n$ is large enough we can find an equilateral inscribed closed polygon $\tilde p_{n}$ of length $\tilde L_{n}\leq 1$ with $n$ vertices that lies 
	in the same knot class as $\gamma$. By rescaling it to unit length via $p_{n}(t)=L\tilde L_{n}^{-1}\tilde p_{n}(\tilde L_{n} L^{-1}t)$, $L=1$, we could show in addition
	that $p_{n}\to\gamma$ in $W^{1,2}(\sphere_{1},\R^{d})$, as
	$n\to\infty$. It is easily seen, exploiting $\gamma'$ Lipschitz, that for $\gamma\in C^{1,1}(\sphere_{1},\R^{d})$ this convergence can be improved to convergence in $||\cdot||_{W^{1,\infty}(\sphere_{1},\R^{d})}$.\\
	\textbf{Step 1}
		From Figure \ref{discretecurvaturepicture} we see $r=r(x,y,z)$ and
		\begin{align*}
			\kappa_{d}(x,y,z)=\frac{4\tan(\frac{\phi}{2})}{\abs{x-y}+\abs{z-y}}=\frac{2\tan(\frac{\alpha+\beta}{2})}{\sin(\alpha)+\sin(\beta)}\,\frac{1}{r}.
		\end{align*}
		\begin{figure}
			\centering 
			\includegraphics[width=0.5\textwidth]{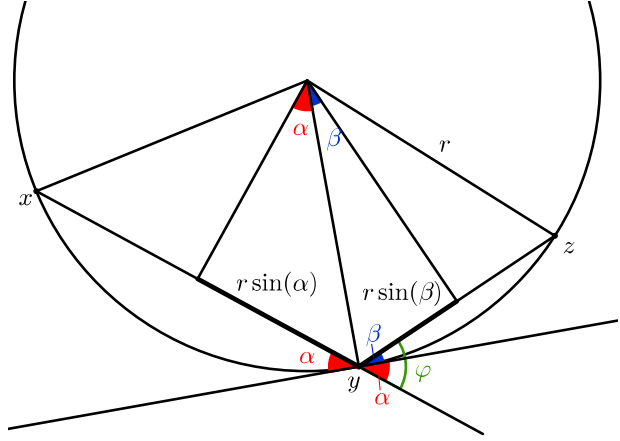}
			\caption{Quantities for the computation of discrete curvature.}
			\label{discretecurvaturepicture}
		\end{figure}%
		Thus, we can estimate
		\begin{align}
			\begin{split}\label{longerangleestimate}
				0\leq \frac{2\tan(\frac{\alpha+\beta}{2})}{\sin(\alpha)+\sin(\beta)}-1\leq \frac{\tan(\alpha+\beta)}{\sin(\alpha)+\sin(\beta)}-1
				\leq \frac{\tan(\alpha+\beta)}{\sin(\alpha+\beta)}-1=\frac{1-\cos(\alpha+\beta)}{\cos(\alpha+\beta)}
				\leq (\alpha+\beta)^{2}
			\end{split}
		\end{align}
		for $\alpha,\beta\in [0,\frac{\pi}{6}]$, since 
		\begin{align*}
			\MoveEqLeft
			\sin(\alpha)+\sin(\beta)=2\Big(\sin(\tfrac{\alpha}{2})\cos(\tfrac{\alpha}{2})+\sin(\tfrac{\beta}{2})\cos(\tfrac{\beta}{2})\Big)
			\leq 2\Big(\sin(\tfrac{\alpha}{2})+\sin(\tfrac{\beta}{2})\Big)\\
			&\leq 2\frac{\sin(\frac{\alpha}{2})\cos(\frac{\beta}{2})+\sin(\frac{\beta}{2})\cos(\frac{\alpha}{2})}{\cos(\frac{\alpha+\beta}{2})} =  2\tan(\tfrac{\alpha+\beta}{2}),
		\end{align*}
		$2\tan(\frac{x}{2})\leq \frac{2\tan(\frac{x}{2})}{1-\tan^{2}(\frac{x}{2})}=\tan(x)$ and $\frac{1}{2}\leq \cos(\alpha+\beta)$, as well as $1-\frac{(\alpha+\beta)^{2}}{2}\leq \cos(\alpha+\beta)$.
		Let $x=\gamma(s)$, $y=\gamma(t)$ and $z=\gamma(u)$ for $s<t<u$ with $\abs{t-s},\abs{u-t}\leq\frac{2L}{n}$. Now, again by Figure \ref{discretecurvaturepicture}, we have
		\begin{align*}
			2Ln^{-1}\geq\abs{t-s}\geq \abs{y-x}=2\sin(\alpha)r\geq 4\pi^{-1}\alpha r\geq 4\pi^{-1}\alpha \Delta[\gamma]\geq \alpha\Delta[\gamma],
		\end{align*}
		or in other words $\alpha\leq 2L\Delta[\gamma]^{-1}n^{-1}$ and the same is true for $\beta$. According to \eqref{longerangleestimate} we can estimate
		\begin{align*}
			\kappa_{d}(x,y,z)\leq \frac{1+(\alpha+\beta)^{2}}{r}\leq (1+16L^{2}\Delta[\gamma]^{-2}n^{-2})\Delta[\gamma]^{-1}.
		\end{align*}
		This means for the sequence of inscribed polygons $\tilde p_{n}$ that
		\begin{align*}
			\limsup_{n\to\infty}\maxCurv(\tilde p_{n})\leq \Delta[\gamma]^{-1}.
		\end{align*}
	\textbf{Step 2}
		According to \cite[Lemma 2.8.2, p.46]{Rawdon1997a} the total curvature between two doubly critical points of polygons must be at least $\pi$. Let $\tilde p_{n}(s_{n})$ and $\tilde p_{n}(t_{n})$ be doubly critical
		for $p_{n}$. Using the curvature bound from the previous step we obtain $\pi\leq 2\Delta[\gamma]^{-1}\abs{t_{n}-s_{n}}$, so that $s_{n}$ and $t_{n}$ cannot converge to
		the same limit. From Lemma \ref{limitsofdoublycriticalpoints} we directly obtain
		\begin{align*}
			\dcsd(\gamma)\leq \liminf_{n\to\infty}\dcsd(\tilde p_{n})\quad\Rightarrow\quad
			\limsup_{n\to\infty}\frac{2}{\dcsd(\tilde p_{n})}\leq \frac{2}{\dcsd(\gamma)}\leq\Delta[\gamma]^{-1} .
		\end{align*}
	\textbf{Step 3}
		Noting that $L\tilde L_{n}^{-1}\to 1$ the previous steps yield
		\begin{align*}
			\limsup_{n\to\infty}\Delta_{n}[p_{n}]^{-1}=\limsup_{n\to\infty}\max\Big\{\maxCurv(p_{n}), \frac{2}{\dcsd(p_{n})}\Big\}
			\leq \Delta[\gamma]^{-1}.
		\end{align*}	
\end{proof}

\begin{lemma}[Limits of double critical points are double critical]\label{limitsofdoublycriticalpoints}
	Let $\gamma\in \mathcal{C}(\mathcal{K})\cap C^{1,1}(\sphere_{1},\R^{d})$, $p_{n}\in \mathcal{P}_{n}$ with $p_{n}\to \gamma$ in $W^{1,\infty}(\sphere_{1},\R^{d})$. 
	Let $s_{n}\not=t_{n}$ be such that $s_{n}\to s$, $t_{n}\to t$ and $s\not= t$.
	If $p_{n}(s_{n})$ and $p_{n}(t_{n})$ are double critical for $p_{n}$. Then $\gamma(s)$ and $\gamma(t)$ are double critical for $\gamma$.
\end{lemma}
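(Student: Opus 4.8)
The plan is to pass to the limit in the two orthogonality conditions that define double criticality, exploiting the fact that $p_n \to \gamma$ in $W^{1,\infty}$ so that both positions and (one-sided) derivatives converge uniformly. First I would recall what it means for $p_n(s_n)$ and $p_n(t_n)$ to be double critical for the polygon $p_n$: since $u \mapsto |p_n(s_n) - p_n(u)|^2$ is a piecewise-quadratic function, $t_n$ being a local extremum forces the one-sided derivatives of this map at $t_n$ to have opposite (weak) signs, which translates into $\langle p_n'^{\pm}(t_n), p_n(t_n) - p_n(s_n)\rangle$ having opposite signs (or vanishing); and symmetrically for $s_n$. If $p_n(t_n)$ is not a vertex this just says $\langle p_n'(t_n), p_n(t_n)-p_n(s_n)\rangle = 0$, and likewise for $s_n$. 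The key structural point is that in all cases there exist unit vectors $v_n \in [p_n'^{-}(t_n), p_n'^{+}(t_n)]$ (a point on the short great-circle arc between the left and right derivatives) and $w_n$ analogously at $s_n$ with
\begin{align*}
	\langle v_n, p_n(t_n) - p_n(s_n)\rangle = 0, \qquad \langle w_n, p_n(t_n) - p_n(s_n)\rangle = 0.
\end{align*}

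Next I would extract convergence of these auxiliary directions. Since $\gamma \in C^{1,1}$, the one-sided derivatives of $p_n$ at $t_n$ both converge to $\gamma'(t)$: indeed the uniform convergence $p_n' \to \gamma'$ in $L^\infty$ together with continuity of $\gamma'$ and $t_n \to t$ forces $p_n'^{\pm}(t_n) \to \gamma'(t)$ (this is exactly the kind of estimate used in Step 3 of Proposition 3 and in \eqref{estimatepolygoncurvenpp}; the small-edge curvature bound controls the jump $d(p_n'^{-}(t_n), p_n'^{+}(t_n)) \le K/n \to 0$, so even without a uniform curvature bound one argues that $v_n$, being squeezed between the two one-sided derivatives, has the same limit). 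Hence $v_n \to \gamma'(t)$ and $w_n \to \gamma'(s)$. Also $p_n(t_n) \to \gamma(t)$, $p_n(s_n) \to \gamma(s)$ by uniform convergence. Passing to the limit in the two displayed orthogonality relations yields
\begin{align*}
	\langle \gamma'(t), \gamma(t) - \gamma(s)\rangle = 0, \qquad \langle \gamma'(s), \gamma(t) - \gamma(s)\rangle = 0,
\end{align*}
and since $s \ne t$ and $\gamma$ is injective we have $\gamma(t) \ne \gamma(s)$, so $(\gamma(s), \gamma(t))$ satisfies the defining equations of a doubly critical pair for the $C^1$ curve $\gamma$.

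There is one subtlety I would need to be careful about: for a $C^1$ curve the vanishing of $\langle \gamma'(t), \gamma(t)-\gamma(s)\rangle$ already means $t$ is a \emph{critical} point of $u \mapsto |\gamma(u) - \gamma(s)|^2$, which is precisely the definition of doubly critical given in the introduction (the smooth definition asks only for criticality, not for a local extremum), so nothing more is required on the limiting side. The main obstacle is therefore the passage from the polygonal local-extremum condition to a clean orthogonality statement uniform in $n$ — i.e. justifying the existence of the interpolating unit vectors $v_n, w_n$ and, if a vertex sits exactly at $t_n$, handling the sign bookkeeping of the left/right difference quotients of the piecewise-quadratic distance function. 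I expect this to be routine but slightly fiddly; once $v_n \to \gamma'(t)$, $w_n \to \gamma'(s)$ is established, taking limits in the inner products is immediate and the injectivity of $\gamma$ (available by hypothesis) supplies $\gamma(s) \ne \gamma(t)$.
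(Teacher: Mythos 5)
Your proposal is correct and follows essentially the same route as the paper: both pass to the limit in the sign conditions coming from local extremality of the piecewise-quadratic distance function, using that the one-sided derivatives $p_n'^{\pm}$ at the critical parameters converge to $\gamma'$ by the $W^{1,\infty}$ convergence and the continuity of $\gamma'$. The paper merely sidesteps your interpolating vectors $v_n,w_n$ with a slicker device: the product $\langle p_{n}'^{+}(s_{n}), p_{n}(t_{n})-p_{n}(s_{n})\rangle\cdot\langle p_{n}'^{-}(s_{n}), p_{n}(t_{n})-p_{n}(s_{n})\rangle$ is nonpositive by extremality yet converges to the square $\langle \gamma'(s),\gamma(t)-\gamma(s)\rangle^{2}\geq 0$, forcing the limit to vanish.
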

\begin{proof}
	Denote by $p'^{+}$ and $p'^{-}$ the right and left derivative of a polygon $p$. Since the piecewise continuous derivatives $p_{n}'$ converge in $L^{\infty}$ to the continuous derivatives $\gamma$
	we have
	\begin{align*}
		0\geq \langle p_{n}'^{+}(s_{n}), p_{n}(t_{n})-p(s_{n})\rangle\cdot\langle p_{n}'^{-}(s_{n}), p_{n}(t_{n})-p(s_{n}) \rangle\to \langle \gamma'(s), \gamma(t)-\gamma(s)\rangle^{2}.
	\end{align*}
	The analogous result is obtained if we change the roles of $s$ and $t$, so that $\gamma(t)$ and $\gamma(s)$ are double critical for $\gamma$.
\end{proof}

\section{Discrete Minimizers}\label{sectiondiscreteminimizers}

\begin{lemma}[Computation of $\Delta_{n}$ for regular $n$-gon $g_{n}$]
	For $n\geq 3$ holds
	\begin{align*}
		\frac{1}{\Delta_{n}[g_{n}]}=2n\tan(\tfrac{\pi}{n}).
	\end{align*}
\end{lemma}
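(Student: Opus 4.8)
The plan is to split the minimum $\Delta_{n}[g_{n}]=\min\{\minRad(g_{n}),2^{-1}\dcsd(g_{n})\}$ and to show that the curvature term is the smaller one. Since the $n$ edges of $g_{n}$ all have length $\ell\vcentcolon=1/n$ and the exterior angle at every vertex is $\phi\vcentcolon=2\pi/n$, the definition of $\kappa_{d}$ gives, at every vertex,
\begin{align*}
	\kappa_{d}=\frac{2\tan(\phi/2)}{(\ell+\ell)/2}=\frac{2\tan(\pi/n)}{1/n}=2n\tan(\tfrac{\pi}{n}),
\end{align*}
so $\minRad(g_{n})=\bigl(2n\tan(\tfrac{\pi}{n})\bigr)^{-1}=\vcentcolon\rho$, and one checks that $\rho$ is exactly the inradius (apothem) of $g_{n}$. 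It therefore suffices to prove $\dcsd(g_{n})\geq 2\rho$, for then $\Delta_{n}[g_{n}]=\min\{\rho,2^{-1}\dcsd(g_{n})\}=\rho$, i.e.\ $\Delta_{n}[g_{n}]^{-1}=2n\tan(\tfrac{\pi}{n})$.

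For the bound $\dcsd(g_{n})\geq 2\rho$ I would work with the closed disc $D$ of radius $\rho$ about the centre $O$ of $g_{n}$; it is inscribed in the convex polygon bounded by $g_{n}$, hence lies on the non-positive side of every supporting line of $g_{n}$, and $\langle z-O,u\rangle\leq\rho$ for all $z\in D$ and all unit vectors $u$. Let $(x,y)\in\dcrit(g_{n})$ with $x\neq y$ and set $u\vcentcolon=\frac{x-y}{\abs{x-y}}$. The crux is to show that $u$ is an outer normal direction of $g_{n}$ at $x$ and $-u$ is one at $y$. Granting this, $O+\rho u\in D$ lies on the non-positive side of the supporting line through $x$ with outer normal $u$, which yields $\langle x-O,u\rangle\geq\rho$, and symmetrically $\langle y-O,-u\rangle\geq\rho$, hence
\begin{align*}
	\abs{x-y}=\langle x-y,u\rangle=\langle x-O,u\rangle+\langle y-O,-u\rangle\geq 2\rho.
\end{align*}

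The claim is proved by inspecting the function $h(v)\vcentcolon=\abs{p(v)-y}^{2}$, which $x=p(t)$ locally extremizes. If $x$ lies in the interior of an edge $e$, then $h$ is smooth near $t$, so $h'(t)=2\langle p'(t),x-y\rangle=0$, whence $u\perp e$; since $y$ lies in the convex hull of $g_{n}$, which is on one side of the line carrying $e$, the vector $u$ must be the outward (not the inward) normal of $e$. If $x$ is a vertex, write $e^{-},e^{+}$ for the unit directions of the edges entering and leaving $x$; then $h$ has left derivative $2\langle e^{-},x-y\rangle$ and right derivative $2\langle e^{+},x-y\rangle$ at $t$. A local minimum at $t$ would give $\langle e^{-},y-x\rangle\geq0\geq\langle e^{+},y-x\rangle$, i.e.\ $y-x$ in the outer normal cone of $g_{n}$ at $x$; but $y-x$ also lies in the tangent cone at $x$ because $y$ is in the convex hull of $g_{n}$, and a convex cone meets its polar only in $\{0\}$, forcing $y=x$ — a contradiction. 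Hence $t$ is a local maximum, i.e.\ $\langle e^{-},x-y\rangle\geq0\geq\langle e^{+},x-y\rangle$, which says precisely that $u$ lies in the outer normal cone of $g_{n}$ at $x$. The argument for $y$ is symmetric, using $u\mapsto\abs{x-p(u)}^{2}$.

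The one genuinely delicate point, which I expect to be the main obstacle, is the vertex case of the claim: one has to translate ``$t$ locally extremizes $v\mapsto\abs{p(v)-p(s)}^{2}$'' at a corner into the one-sided derivative inequalities and then exclude the local-minimum alternative via the polarity of the tangent and normal cones at a vertex of a convex polygon. The curvature computation, the edge case, and the supporting-line estimate are routine. I would also remark that convexity of $g_{n}$ is used in an essential way — segments between boundary points stay inside, and each edge sits on a supporting line — so the statement is genuinely special to the regular $n$-gon and fails for equilateral polygons in general.
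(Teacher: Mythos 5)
Your proof is correct and follows essentially the same route as the paper: both split the minimum, compute $\maxCurv(g_{n})=2n\tan(\pi/n)$ from the discrete curvature formula, and reduce the problem to showing $\dcsd(g_{n})$ is at least twice the apothem. The only difference is that the paper justifies the $\dcsd$ bound by appealing to Figure \ref{regularngons}, whereas your supporting-line/normal-cone argument (including the vertex case via the one-sided derivatives and the polarity of tangent and normal cones) supplies a rigorous proof of that step.
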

\begin{proof}
	From Figure \ref{regularngons} we see that for the regular $n$-gon $g_{n}$ of length $1$ holds
	\begin{align*}
		\dcsd(g_{n})\geq \frac{1}{n\tan(\frac{\pi}{n})}
	\end{align*}
	\begin{figure}[h!]
		\centering
		\includegraphics[width=0.4\textwidth]{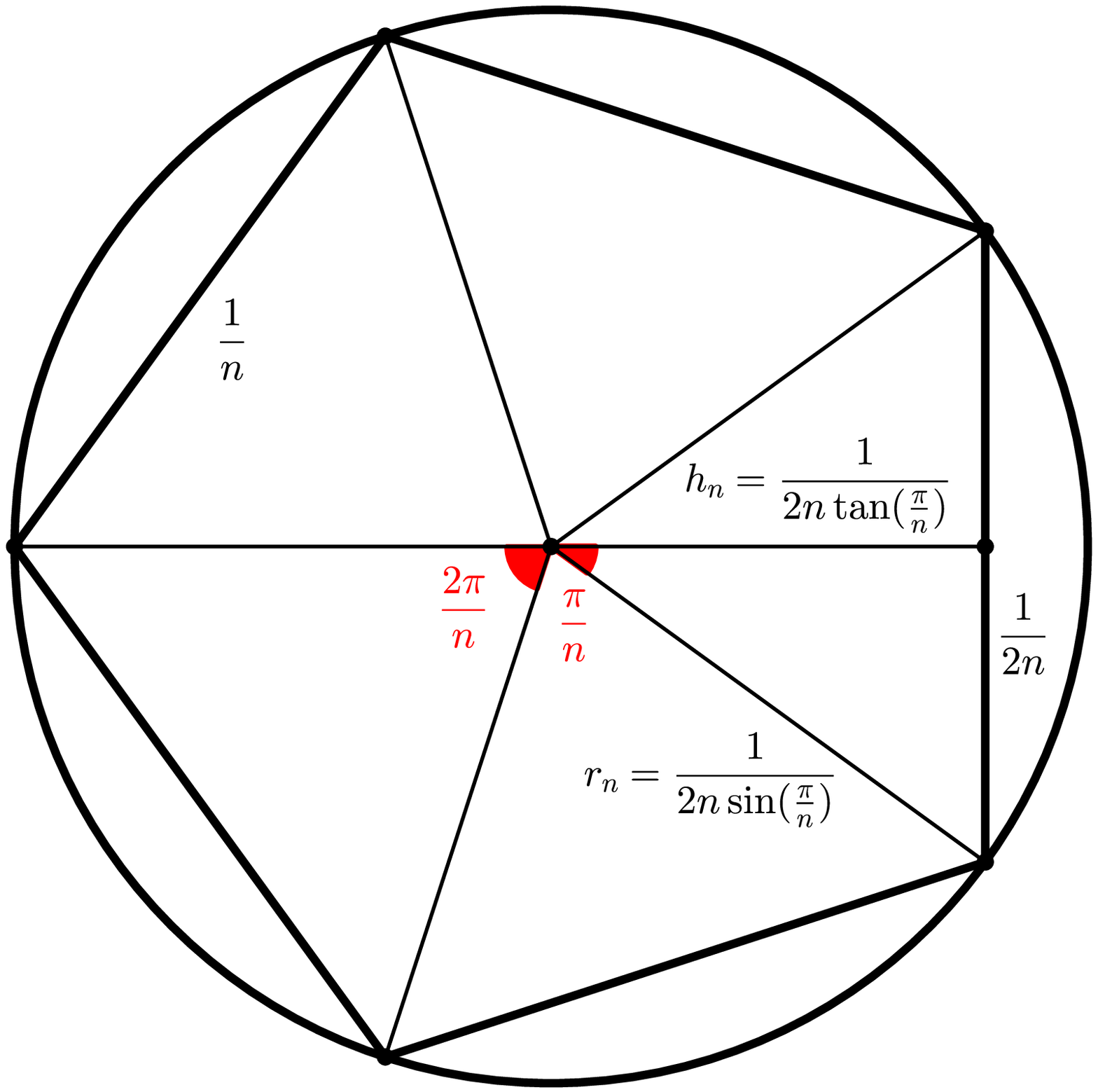}\qquad
		\includegraphics[width=0.4\textwidth]{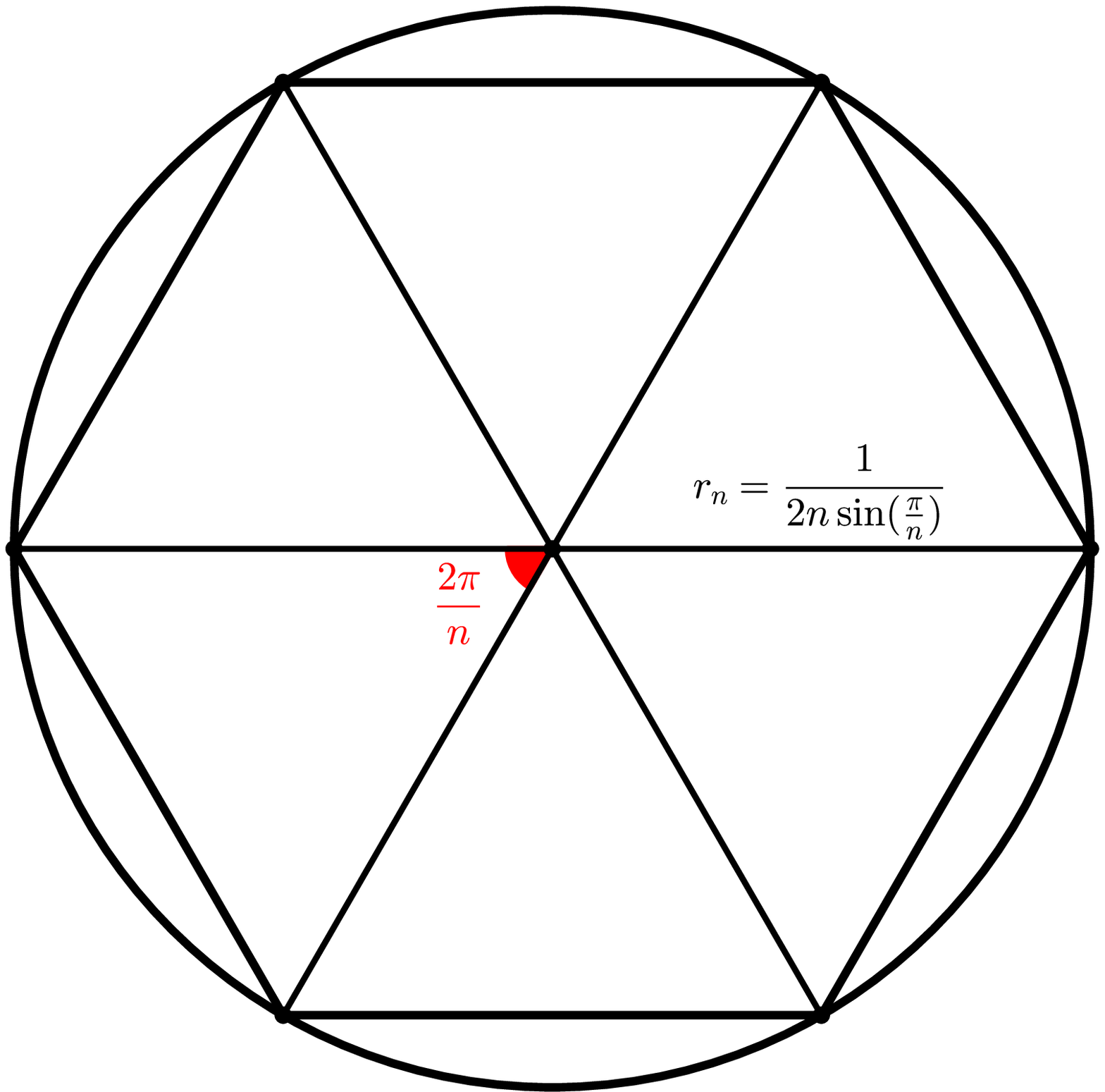}
		\caption{Computation of $\dcsd$ for regular $n$-gons of length $1$.}
		\label{regularngons}
	\end{figure}
	and as $\maxCurv(g_{n})=2n\tan(\frac{\pi}{n})$ by Figure \ref{discretecurvaturepicture} we have shown the proposition.
\end{proof}

\begin{proposition5}[Regular $n$-gon is unique minimizer of $\Delta_{n}^{-1}$]
	Let $p\in \mathcal{P}_{n}$ then
	\begin{align*}
		\Delta_{n}[g_{n}]^{-1}\leq \Delta_{n}[p]^{-1},
	\end{align*}
	with equality if and only if $p$ is a regular $n$-gon.
\end{proposition5}
\begin{proof}
	According to Fenchel's Theorem for polygons, see \cite[3.4 Theorem]{Milnor1950a}, the total curvature is at least $2\pi$, 
	i.e., $\sum_{i=1}^{n}\phi_{i}\geq 2\pi$ for the exterior angles $\phi_{i}=\measuredangle(x_{i}-x_{i-1},x_{i+1}-x_{i})$. 
	This means there must be $j\in\{1,\ldots,n\}$ with $\phi_{j}\geq \frac{2\pi}{n}$. Thus 
	\begin{align}\label{inequalityregularngonthickness}
		\Delta_{n}[p]^{-1}\geq \maxCurv(p)\geq 2n\tan(\tfrac{\phi_{j}}{2})\geq 2n\tan(\tfrac{\pi}{n})=\Delta_{n}[g_{n}]^{-1}.
	\end{align}
	Equality holds in Fenchel's Theorem if and only if $p$ is a convex planar curve. If $\phi_{j}<\frac{2\pi}{n}$ there must be $\phi_{k}>\frac{2\pi}{n}$ 
	and thus $\Delta_{n}[p]^{-1}>\Delta_{n}[g_{n}]^{-1}$. Since the regular $n$-gon $g_{n}$ is the only convex equilateral polygon with $\phi_{i}=\frac{2\pi}{n}$ for $i=1,\ldots,n$
	we have equality in \eqref{inequalityregularngonthickness} if and only if $p$ is a regular $n$-gon.
\end{proof}

\setlength{\parindent}{0cm}

\bibliography{/Users/sebastianscholtes/Documents/library.bib}{}
\bibliographystyle{abbrv}

\end{document}